\newcommand{\tolP}{\text{tol}_P}
\newcommand{\errP}{\text{err}_P}
\newcommand{\euler}{\mathrm{e}}
\newcommand{\uf}{u}
\newcommand{\vf}{\mathbf{v}}
\newcommand{\nt}{\vec{n}}
\newcommand{\taut}{\vec{\tau}}
\newcommand{\vt}{\mathbf{v}}
\newcommand{\ft}{\mathbf{f}}
\newcommand{\ut}{\mathbf{u}}
\newcommand{\wt}{\mathbf{w}}
\newcommand{\sigmat}{\boldsymbol{\sigma}}
\newcommand{\Ft}{\mathbf{F}}
\renewcommand{\div}{{\operatorname{div}}}
\renewcommand{\O}[1]{{\mathcal{O}}\left({#1}\right)}
\newcommand{\sfrei}[1]{{#1}}
\pgfplotsset{compat=1.14}
\title{Efficient approximation of flow problems with multiple scales
  in time}
\author{S.~Frei\thanks{Department of Mathematics \& Statistics, University of Konstanz, \texttt{stefan.frei@uni-konstanz.de}} \and
  T.~Richter\thanks{Otto-von-Guericke-Universit\"at Magdeburg, 39106 
    Magdeburg, \texttt{thomas.richter@ovgu.de}, Interdisciplinary
    Center for Scientific Computing, Heidelberg}}
\begin{document}

\maketitle

\begin{abstract}
In this article we address flow  problems that carry a multiscale
character in time. In particular we consider the Navier-Stokes flow in a channel on a fast scale
that influences the movement of the boundary which undergoes a deformation on a slow scale
in time. We derive an averaging scheme that is of first order with
respect to the ratio of time-scales $\epsilon$.
% and of second order in
% all discretisation variables.
In order to cope with
the problem of unknown initial data for the fast scale problem, we
assume near-periodicity in time. 
Moreover, we construct a second-order accurate time discretisation scheme and derive a
complete error analysis for a corresponding simplified ODE system.
The resulting multiscale scheme does not ask for the continuous
simulation of the fast scale variable and shows powerful speed-ups up
to 1:10\,000 compared to a resolved simulation. 
Finally, we present some numerical examples for the full Navier-Stokes
system to illustrate the convergence and performance of the approach. 
\end{abstract}

%%%%%%%%%%%%%%%%%%%%%%%%%%%%%%%%%%%%%%%%%%%%%%%%%%

\section{Introduction}\label{sec:intro}

We are interested in the numerical approximation and long-term simulation of flow problems that carry a multiscale
character in time. Such problems appear for example in 
the formation of atherosclerotic plaque in arteries, where flow dynamics acting on a scale of milliseconds to seconds have an 
effect on plaque
growth in the vessel, which typically takes place within a range of several months. 
Another application is the investigation of chemical
flows in pipelines, where long-time effects of weathering, accelerated
by the transported substances, cause material alteration.

These examples have in common, that it is computationally infeasible
to resolve the fast scale over the whole time  
interval of interest. In the case of atherosclerotic plaque growth, a suitable time-step
of $\unit[\frac{1}{20}]{s}$ would require nearly $10^9$ steps to
cover the period of interest, which is at least $\unit[6]{months}$. 

Inspired by the temporal dynamics of atherosclerotic plaque growth, we
will consider the flow in a channel whose boundary is deformed over a
long time scale. This deformation is controlled by the
concentration variable $u(t)$ that is governed by a simple reaction equation
and that depends on the fluid-forces  
\begin{equation}\label{problem:coupled}
  \begin{aligned}
    \vt(0)&=\vt_0,&\quad \operatorname{div}\, \vt&=0,\quad
    \rho (\partial_t \vt + (\vt\cdot\nabla)\vt) - \div\,\sigmat(\vt,p) 
    =\ft
    &\quad \text{ in }&\Omega(u(t))\\
    u(0)&=0,&\quad u' &= \epsilon R(u,\vt).
  \end{aligned}
\end{equation}
Here, $\rho$ is the density of the fluid,
$\sigmat=\rho\nu(\nabla\vt+\nabla\vt^T)-pI$ the Cauchy stress with
the kinematic viscosity $\nu$ and $R(\vt, u)\ge 0$ a reaction term
describing the influence of the fluid forces (namely the
wall shear stress) on the boundary 
growth. {The growth term $R(\cdot,\cdot)$ it modeled
  such that $|R(\cdot,\cdot)| = {\cal O}(1)$:
  \begin{equation}\label{reaction}
    R(u,\vt)\coloneqq
    \big(1+u\big)^{-1}\big(1+|\sigma_{WSS}(\vt)|^2\big)^{-1},\quad
    \sigma_{WSS}(\vt)\coloneqq \sigma_0^{-1}\int_\Gamma\rho\nu
    \big(I_d-\nt\nt^T\big)(\nabla\vt+\nabla\vt^T)\nt\,\text{d}o,
  \end{equation}
  where $\nt$ denotes the outward facing unit normal
  vector at the boundary $\Gamma$. 
  The parameter $\sigma_0>0$ will be tuned to give
  $|\sigma_{WSS}(v)|={\cal O}(1)$, see Section~\ref{sec:num}.}
The domain $\Omega=\Omega(u(t))$ depends explicitly on the
concentration $u(t)$. We show a sketch of the configuration in
Figure~\ref{fig:config}. The flow problem is driven by a periodic oscillating
inflow profile of period $\unit[1]{s}$ 
\[
\vt=\vt^D\text{ on }\Gamma_{in},\text{ with }\vt^D(t)= \vt^D(t+1s).
\]
This period describes the \emph{fast scale} of the problem.
{By $\epsilon\ll 1$ we denote a small
  parameter that controls the time scale of the (slow) growth of the
  concentration, i.e. $|u'|={\cal O}(\epsilon)$ and $T={\cal
    O}(\epsilon^{-1})$ is the expected long term horizon. }
While the problem itself is strongly simplified compared to the
detailed non-linear mechano-chemical FSI model  
of plaque-growth~\cite{Chen2012, YangJaegerNeussRaduRichter2015,
  FreiRichterWick2016}, we choose the parameters in such a way that
the temporal dynamics are very similar.

  The structure of this article is as follows: \sfrei{In Section~\ref{sec:time}
  we introduce a simple model problem consisting of two coupled ODEs, that are related to
  \eqref{problem:coupled}, and for which we will be able to conduct a complete error analysis. Moreover, 
  we discuss some of the available approaches in literature and outline the multiscale algorithm developed in this article.
  In Section~\ref{sec:ms} we derive the effective
  long term equations and give an error analysis on the continuous level.} In Section~\ref{sec:timedisc} we describe the temporal
  discretisation of the multiscale scheme and show optimal order
  convergence in all discretisation parameters: mesh size $h$, time step
  size $k$ for the fast problem and time step size $K$ for the slowly
  evolving variable. In Section~\ref{sec:num} we apply the multiscale
  scheme to the complex problem introduced in Section~\ref{sec:time},
  which is based on the Navier-Stokes equations. We show numerically
  optimal-order convergence in agreement to the theoretical findings
  for the simplified system. We
  conclude with a short summary and a discussion of some open problems.

\begin{figure}[t]
  \begin{center}
    \includegraphics[width=\textwidth]{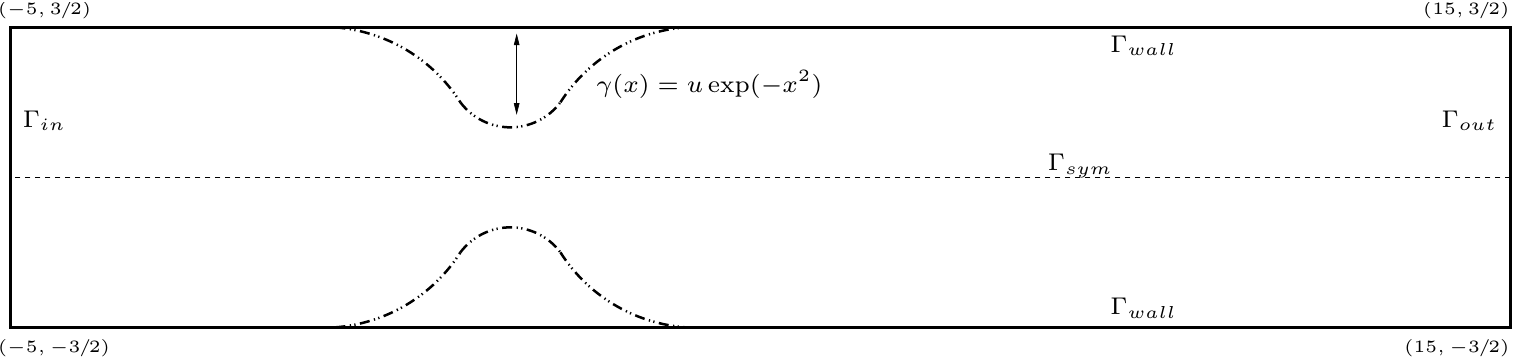}
  \end{center}
  \caption{Configuration of the test case. We study flow in a channel with a
    boundary $\Gamma$ that depends on a concentration variable
    $u$. This $u$ follows a simple reaction law with a right-hand side
    depending on the wall shear stress on
    $\Gamma_{wall}$.
  \label{fig:config}}
\end{figure}

%%%%%%%%%%%%%%%%%%%%%%%%%%%%%%%%%%%%%%%%%%%%%%%%%%
%%%%%%%%%%%%%%%%%%%%%%%%%%%%%%%%%%%%%%%%%%%%%%%%%%
%%%%%%%%%%%%%%%%%%%%%%%%%%%%%%%%%%%%%%%%%%%%%%%%%%

%\section{Problem description} [47] Majda

\section{Time scales}\label{sec:time}

In this section we start by analyzing the temporal multiscale
character of the plaque formation problem. We simplify the
coupled problem and introduce a model problem coupling two
ODEs. Then, we present various approaches for the numerical treatment
of temporal multiscale problems that are discussed in
literature. Finally we sketch the idea of the multiscale scheme that
is considered in this work, which fits into the framework of the
\sfrei{heterogeneous} multiscale method.

\subsection{A model problem}
{
  To start with, we introduce a simple model problem, a system of two
  ODEs that shows the same coupling and temporal multiscale
  characteristics as the full plaque growth system
  \sfrei{
  \begin{subequations}
  \label{problem:simple}
  \begin{align}
    \label{motms:kappa}
    u(0)&=u_0,&\quad  u'(t) &= \epsilon R(u(t),v(t))\\
    \label{motms:v}
    v(0)&=v_0,&\quad v'(t) + \lambda(u(t))v(t) &= f(t),
  \end{align}
\end{subequations}}
% where $f$ is periodic with period $1$ and
% $\lambda(u)\ge \lambda_0>0$.
%   
%   %
%   \begin{equation}\label{problem:simple}
%     v(0)=v_0,\; u(0)=u_0,\quad
%     v' + \lambda(u)v = f,\;
%     u' =\epsilon R(u,v),
%   \end{equation}
%   %
  where $f(t)=f(t+1)$ is
  periodic and $R(\cdot,\cdot)$ is given by
  \begin{equation}\label{reaction:simple}
    R(u,v)\coloneqq \big(1+u\big)^{-1}\big(1+v^2\big)^{-1}. 
  \end{equation}
  For the reaction term it holds $|R(u,v)|\le 1$. \sfrei{The parameter $\lambda(u)\ge \lambda_0>0$ depends on the concentration $u$. 
  We will assume that the relation $u\to\lambda(u)$ is differentiable and that the derivative $\frac{d\lambda(u)}{du}$ remains bounded.}}  
  
 {In
  Section~\ref{sec:relevance} we will argue that this system can indeed be
  considered as a simplification of the full plaque growth system by
  neglecting the nonlinearity and by diagonalizing the
  resulting Stokes equation with respect to an orthonormal
  eigenfunction basis.
Moreover, if we introduce $\tau\coloneqq \epsilon t$, 
  $v_\tau(\tau)\coloneqq v(t)$,
  $u_\tau(\tau)\coloneqq u(t)$ as well as
  $f_\tau(\tau)\coloneqq f(t)$
  we can scale this system  to
  \begin{equation}\label{problem:simple:scaled}
    v_\tau(0)=v_0,\; u_\tau(0)=u_0,\quad 
    v_\tau'+\epsilon^{-1}\lambda(u_\tau)
    v_\tau  =\epsilon^{-1}f_\tau,\; 
    u_\tau' =R(u_\tau,v_\tau),
  \end{equation}
  which reveals the typical character of ODE systems with multiple
  scales in time as discussed in~\cite{E2011,Abdulleetal2012}. In the
  language of the heterogeneous multiscale method (HMM), see
  also~\cite{EEngquist2003}, such a problem is called a type B problem
  and it is characterized by the acting of fast and slow scales
  throughout the whole (long) time span $[0,T]$ in contrast to
  problems with localised singular behavior. 
  Since $|R|={\cal O}(1)$ it holds $|u'_\tau|={\cal O}(1)$ and
  $u_\tau$ describes the slow variable while $|v'_\tau|={\cal
    O}(\epsilon^{-1})$ indicates the fast and oscillatory
  variable.}

\subsection{Numerical approaches for temporal multiscale problems}

While multiscale problems in space are extensively studied in
literature, see e.g.\,\cite{Cioranescu, Oleinik}, less 
works are found on problems with multiscale character in time. Some literature exists that uses a homogenisation approach based on asymptotic expansions in 
time for viscoelastic, viscoplastic or elasto-viscoplastic solids~\cite{Guennouni1988, YuFish2002, AubryPuel2010, HaoualaDoghri2015}. Under suitable 
assumptions, the short-scale part of the multiscale algorithm becomes stationary for this class of equations, such that difficulties to 
define initial values on the short scale are avoided.

{Multirate time stepping
  methods~\cite{GanderHalpern2013} split the system into slow and
  large components and use different time step sizes according to the
  dominant scales. All scales are still resolved on the complete time
  interval. Since the fast scale of problems~(\ref{problem:coupled})
  which requires a small time step
  is the computationally intensive Navier-Stokes equations and since
  the scales are vastly 
  separated, such multirate methods are not
  appropriate for the problem under investigation.}

{In the context of continuum damage mechanics,
  processes with high frequent oscillatory impact can be approached by
  block cycle jumping techniques~\cite{LemaitreDoghri1994}, where a
  large number of  cycles is skipped and replaced by linear
  approximation of the damage effect.
  An overview of different techniques is given in the first two
  introductionary sections of~\cite{ChakrabortyGhosh2013}. These
  approaches do not resolve the complete system on full temporal
  interval but reside on local solutions. This gives rise to the
  problem of finding initial values. }

  If the time scales are close enough that the short-scale dynamics
  can be resolved within one time step of the long-scale
  discretisation,  the \emph{Variational Multiscale
    Method}~\cite{HughesStewart1996, Bottasso2002} or approaches that
  construct long-scale basis functions from the short-scale
  information~\cite{Passieuxetal2010, Ammaretal2012} are
  applicable. Similar algorithms are also used to construct
  parallel-in time integrators, for  example the \emph{parareal}
  method~\cite{MadayTurinici2005}. In this work, we are interested in
  problems with a stronger \emph{scale separation}, where the
  resolution of the short scale within a long-scale interval is very
  costly up to computationally unfeasible.

  Only very few numerical works can be found concerning flow problems with multiple scales in time. An exception are the 
works of Masud \& Khurram~\cite{MasudKhurram2006, MasudKhurram2006FSI}, where the \emph{Variational Multiscale Method} is applied,
assuming again that the time scales are sufficiently close. On the other hand, several theoretical works exist that 
show convergence towards averaged equations for specific flow configurations in 
the situation that the ratio of 
time scales $\epsilon = \frac{t_\text{fast}}{T_\text{long}}$ tends to zero, 
see e.g.~\cite{Ilyin1998, Chepyzhov2008, Levenshtam2015}, however without considering practical 
numerical algorithms or discretisation. 

A common numerical approach is to replace the fast problem by an
averaged one using a fixed-in-time inflow
profile~\cite{YangJaegerNeussRaduRichter2015, Chen2012}. It is however
widely accepted and also confirmed in numerical
studies~\cite{FreiRichterWick2016} that such a simple averaging does not necessarily 
reproduce the correct dynamics.
In~\cite{FreiRichterWick2016} we presented a
first multiscale scheme for the approximation of such a problem, however
with a focus on the modelling of a full closure of the channel and
without any analysis on the robustness and accuracy. Similar algorithms can be found in 
Sanders et al~\cite{SandersVerhulstMurdock2007} and by Crouch \& Oskay~\cite{CrouchOskay2015}
in different applications.
In this work, we will
derive an improved algorithm in a mathematically rigorous way, including a 
detailed error analysis for both modelling and discretisation errors. To our knowledge this 
is the first time that the interplay between modelling errors of the temporal multiscale scheme 
and temporal discretisation errors on both scales is analysed.

{One of the most prominent class of techniques is
    the \emph{heterogeneous multiscale method
      (HMM)}~\cite{EEngquist2003,E2011,Abdulleetal2012,EngquistTsai2005} that aims 
    at an efficient decoupling of macro-scale and micro-scale, where the
    latter one enters the macro-scale problem in terms of temporal
    averages. Typically, the procedure is as follows: one determines
    the fast and the slow variables of the coupled problem. For the
    slow variables an integrator with a long time step $\Delta
    T\coloneqq T_{n+1}-T_n$ and good stability properties is used. At each of
    these macro time steps, the fast scale problem is initialized
    based on the current slow variable and solved on the interval
    $I_n^\eta\coloneqq [T_n,T_n+\eta]$. Finally, the fast variable
    output on $I_n^\eta$ is averaged to yield the effective operator
    for the slow scale problem. }

  {The efficiency of the resulting HMM scheme depends on
    the choice of $\eta$ which indicates the scale to allow for
    equilibration and adjustment of the micro model. Too large  values
    will reduce the 
    efficiency, too small values will limit the accuracy. The
    underlying problem is the lack of initial values at the new macro
    step for the fast scale, which in our case~(\ref{problem:coupled})
    or~(\ref{problem:simple}), is the oscillatory velocity
    $\vt(t)$ and $v(t)$, respectively. The realisation presented
    in this article is based on time-periodic solutions to the
    micro problem. Instead of solving the microscale problem on an
    interval $I_n^\eta$ at macro step $T_n$ we aim at a localised
    solution of the fluid problem that satisfies a periodicity
    condition in time. This approach allows us to conduct a
    complete error analysis of the resulting scheme when applied to
    the simplified model problem~(\ref{problem:simple}). 
  } 

  \subsection{Outline of the multiscale scheme}
 
  {We conclude this section by briefly describing the
    multiscale algorithm that is considered in this article. The
    derivation given here is based on problem~(\ref{problem:simple}). We start
    by defining the slow variable as average of the concentration $\uf(t)$
    \begin{equation}\label{show:0}
      U(t)\coloneqq \int_t^{t+\unit[1]{s}}\uf(s)\,\text{d}s. 
    \end{equation}
    This gives rise to the averaged equation for the long term
    dynamics
    \begin{equation}\label{show:1}
      U'(t) =  \int_t^{t+1}\epsilon R(\uf(s),v(s))\,\text{d}s 
    \end{equation}
    A time integration formula with a macro time-step is used to
    approximate this equation. Two approximation steps are performed to
    reach an effective equation. First, the reaction term
    in~(\ref{show:1}) is evaluated in $U(t)$ instead of $\uf(s)$ and
    second, the fast component 
    $v(s)$ will be replaced by the localised solution of the
    time-periodic problem obtained for a fixed value of $U(t)$:
    \begin{equation}\label{show:2}
      v'_{U(t)} + \lambda(U(t)) v_{U(t)} = f\text{ in }
      [0,1]\text{ with } 
      v_{U(t)}(1)=v_{U(t)}(0). 
    \end{equation}
    \sfrei{These approximations will be discussed and analysed in the following section.}    
    Assuming that~(\ref{show:1}), approximated in these two steps, is
    integrated with the forward Euler 
    method, a macro time step is given by
    \begin{equation}\label{show:3}
      U_{n}= U_{n-1} + (T_{n}-T_{n-1})  \int_{T_{n-1}}^{T_n}
      \epsilon R(U_{n-1},v_{U_{n-1}}(s))\,\text{d}s. 
    \end{equation}}
    
    \sfrei{
    \subsubsection{Motivation for the locally periodic approximations}
    
    Due to the nonlinearity of the reaction term $R(\cdot,\cdot)$, the micro-scale variations
    in $v(s)$ can not simply be averaged. Instead the velocities $v(s)$ need to be computed
    on the fast scale in each macro step $T_{n-1} \to T_{n}$, in order to obtain a good approximation of 
    the integral on the right-hand side of \eqref{show:1}. A computation of $v(s)$ over the complete 
    interval $[T_{n-1}, T_{n}]$ is however unfeasible for small $\epsilon$. For this reason, the imposition 
    of accurate initial values $v(T_n)$ for the fast-scale problem is not straight-forward. Neither 
    the short-scale velocity $v(T_{n-1})$ from the previous macro-time step nor an averaged quantity 
    $V(T_n)$ can guarantee a sufficiently good approximation for $v(T_n)$. In practice, a relaxation 
    time $\eta$ is frequently introduced (see for example~\cite{EngquistTsai2005, Abdulleetal2012}), in order to improve the initial values by means of a few 
    forward iterations.
    
    As an alternative, we propose to introduce the time-periodic fast-scale problem \eqref{show:2}. This has the advantage that in principle only one 
  period of the fast-scale problem needs to be resolved per macro-step. We can show theoretically (Lemma 8) that the approximation error introduced by the periodic problem
  is of order $\epsilon$. Efficient approximations of these time-periodic problems will be discussed in Section~\ref{sec:periodic}.}

%     By introducing time-periodic solutions $v_{U_n}$ we avoid the
%     necessity of choosing the relaxation time $\eta$. In principle it
%     is sufficient to integrate the local problems~(\ref{show:2}) over
%     one single interval for each macro time step. This would however
%     require exact initial values $v_{U_n}(0)=v_{U_n}(1)$.  
%     In
%     Section~\ref{sec:periodic} we discuss the approximation of these
%     periodic problems. 

\sfrei{
\subsubsection{Abstract multiscale scheme}

We conclude this section by formulating the abstract multiscale scheme that can be
applied to both problems, the plaque growth system and the simplified
model problem. }
{
\begin{algo}[Abstract Multiscale Scheme]\label{algo:scheme}
  Let $0=T_0<T_1<\dots<T_N=T$ be a partition of the macro
  interval with uniform step size $K\coloneqq T_n-T_{n-1}$. Further,
  let $U_0\coloneqq \uf_0$ be the initial value 
  of the slow variable. Iterate for $n=1,2,\dots$
  \begin{enumerate}
  \item Solve the time-periodic problem~(\ref{periodic:simple})
    or~(\ref{periodic}) for $\vt_{U_{n-1}}$. 
  \item Evaluate the reaction term
    \[
    R_{n-1}\coloneqq \int_0^1
    R(U_{n-1},\vt_{U_{n-1}}(s))\,\text{d}s 
    \]
  \item Forward the slow variable with an (explicit) one-step 
    scheme
    \[
    U_n = {\cal F}(K;U_{n-1};R_{n-1})
    \]
  \end{enumerate}
\end{algo}
\sfrei{The structure of the time integrator ${\cal F}$ depends on the system of equations and 
the time-stepping method. For simplicity,} we have formulated the algorithm for an explicit time integrator in
the slow variable. 
The use of an implicit time stepping scheme would
require an iterated evaluation of the periodic problem for updated
values of $U_n$. In the case of $r$-step schemes periodic solutions $\vt_{U_{n-k}}$ 
would be required for $k=1,\dots,r$.
}

\begin{remark}
\sfrei{We note that the proposed algorithm does not compute an average
  of the fast variable $v$. An approximation to $v$ is only computed
  on the short periodic interval of the fast-scale as $\vt_{U_{n-1}}$ (Step 1). The 
slow variable $u$, on the other hand, is only computed on the slow scale as average $U$ (Step 3).}
\end{remark}

\sfrei{The typical behaviour of the slow and fast variables is illustrated in Figure~\ref{fig:scheme} (top). On the bottom of Figure~\ref{fig:scheme}, the 
multiscale algorithm is visualised, including the transfer of quantities between the slow and the fast scale. For ease of 
presentation a large $\epsilon$ has been chosen for the purpose of visualisation.}

\begin{figure}[h!]
  \begin{center}
    \includegraphics[width=0.7\textwidth]{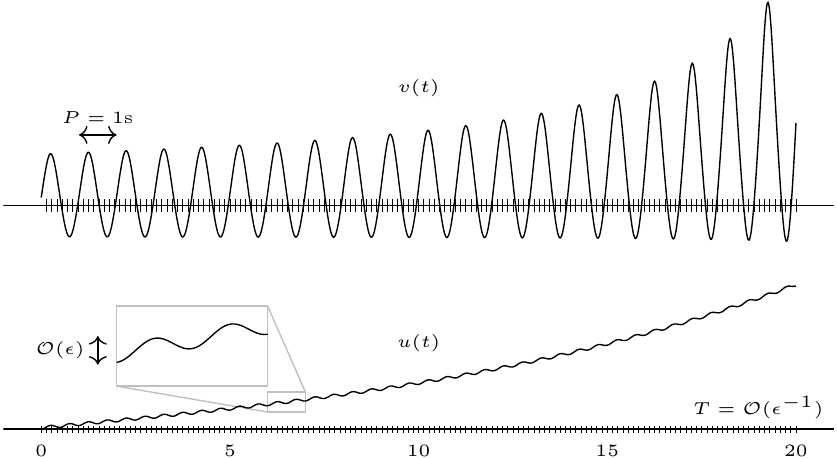}
    \caption*{{Layout of the multiscale problem. The
        fast variable 
      $v(t)$ (top) and the ``slow'' variable $u(t)$ (bottom) couple on the
      complete long time interval $I=[0,T]$. The ``slow'' variable is
      also oscillating, but the oscillations are small, of size ${\cal
        O}(\epsilon)$. The fast variable is locally nearly periodic. }}
    \includegraphics[width=0.7\textwidth]{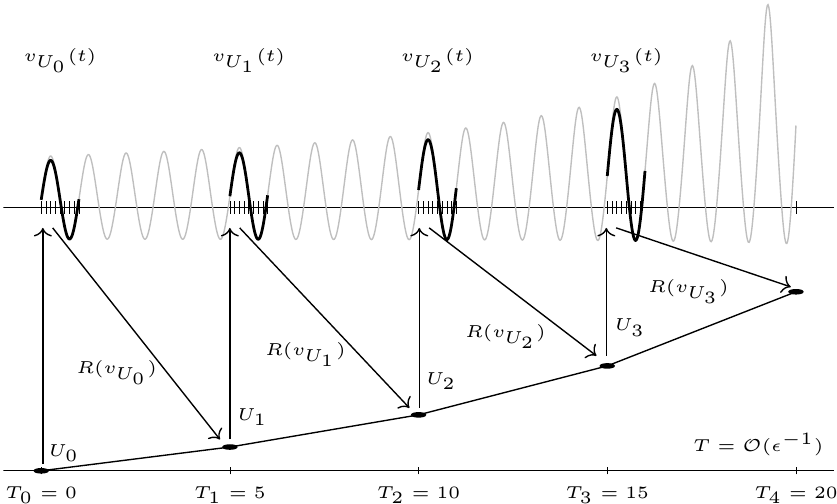}
    \caption{\label{fig:scheme}{Construction of the multiscale scheme: 0. The slow variable $U(t)$ is discretised
      with a time-stepping scheme with macro step size $K\gg
      1$.
      1. In each step the current slow state $U_{n-1}$
      is transferred to the fast problem (top) and a periodic solution $\vt_{U_{n-1}}$ is 
      computed on $[T_{n-1},T_{n-1}+1]$ as approximation for $\vt(t)$.      
    %1. We average
    %  the slow variable $U(t)$. 
%       2. The slow variable is discretized
%       with a time stepping scheme of large step size $K\ll
%       \epsilon$. 3. In each step, the current slow state $U_n$ is
%       transferred to the fast problem and a periodic solution is
%       computed on $[T_n,T_n+1]$. 
      2. The averaged reaction term $R_{n-1}=R(\vt_{U_{n-1}})$ is computed from $\vt_{U_{n-1}}$ 
      and transferred to the slow problem (bottom).
      3. The slow variable $U_{n}$ is updated by the macro step $T_{n-1}\to T_n$.}}
  \end{center}
\end{figure}
    
    {To conclude this section we anticipate the main result of 
    the analysis given below. For the combination of the second order
    Adams-Bashforth rule for the discretisation of the slow problem and the Crank-Nicolson
    scheme for the fast problem we will show optimal convergence of  
    the resulting multiscale scheme:
    \[
    |U_N-\uf(T)| = {\cal O}(\epsilon) + {\cal O}(\epsilon^2 K^2) +
    {\cal O}(k^2) 
    + {\cal O}(\tolP).
    \]
    By $K$ we denote the step size of the macro solver,
    $k$ is the step size of the  micro solver and by 
    $\tolP$ we denote the tolerance of the periodicity constraint:
    $\max_n |v_{U_n}(1)-v_{U_n}(0)|<\tolP$.}

  %\section{Temporal multiscale algorithms}
  \section{Derivation and analysis of the effective equations}
  \label{sec:ms}

  {
  In this section we derive the temporal multiscale scheme that has been
  outlined in the previous section. We will discuss the coupled
  Navier-Stokes problem on the evolving domain $\Omega(u)$,
  problem~(\ref{problem:coupled}) and the reduced ODE
  system~(\ref{problem:simple}) side by side. Whenever 
  results apply to the ODE system only, we will clearly mention
  this. We start by collecting some preliminary assumptions on the
  underlying problems. }

\subsection{Preliminaries}
\begin{assumption}[Reaction term]\label{ass:reaction}
  Let $u_{max}<\infty$ be a maximum concentration. 
  Let $0\le u\le u_{max}$ and $\vt\in X$ ($X=\mathds{R}$ for the model 
  problem, $X=H^2(\Omega)$ for the plaque growth problem).
  The reaction term is bounded
  \begin{equation}\label{ass:reaction:1}
    |R(u,\vt)| \le C_{A\ref{ass:reaction}a},
  \end{equation}
  and Lipschitz continuous in both arguments
  \begin{equation}\label{ass:reaction:2}
    |R(u_1,\vf)-R(u_2,\vf)| \le C_{A\ref{ass:reaction}b} |u_1-u_2|,\quad
    |R(u,\vf_1)-R(u,\vf_2)| \le C_{A\ref{ass:reaction}b} \|\vf_1-\vf_2\|_X,
  \end{equation}
  where the constant $C$ does not depend on $\epsilon$. 
\end{assumption}

{
Assumption~\ref{ass:reaction} is easily
verified for the simplified reaction term~(\ref{reaction:simple}). A proof for
Navier-Stokes case will be given in Section~\ref{sec:relevance}. }

{\begin{remark}[Generic constants]
  Throughout this manuscript we use generic constants $C$. These
  constants may depend on the domain $\Omega$, the maximum concentration
  $u_{max}$, the right hand side $\ft$ and the Dirichlet data.
  They do, however, not depend on the solution, the scale parameter
  $\epsilon$ or the discretisation parameters that will be introduced
  in the remainder of this article. 
\end{remark}}
We further assume that the isolated micro problems allow for a unique
periodic solution:
\begin{assumption}[Periodic solution]
  \label{ass:periodic}
  Let $\uf\in\mathds{R}$ with  $0\le \uf\le \uf_{max}$. We assume that
  there exist unique periodic solutions $v_\uf\in C([0,1])$ to 
  \begin{equation}\label{periodic:simple}
        \partial_t v_\uf + \lambda(\uf)v_\uf= f\text{ in
    }[0,1],\quad 
    v_\uf(1)=v_\uf(0),
  \end{equation}
  as well as solutions $\vt_\uf\in H^2(\Omega(u), p_\uf \in H^1(\Omega(u))$ to the incompressible Navier-Stokes
  equations
  \begin{subequations}\label{periodic}
    \begin{align}
      \nabla\cdot\vt_\uf = 0,\quad
      \rho(\partial_t \vt_\uf + (\vt_\uf\cdot\nabla)\vt_\uf)
      - \div\,\sigmat(\vt_\uf,p_\uf) &= \ft&&\text{in }[0,1]\times
      \Omega(u)\\
      \vt_\uf &= \vt^D_\uf&&\text{on }[0,1]\times \partial\Omega(\uf)\\
      \vt_\uf(1)&=\vt_\uf(0)&&\text{in }\Omega(\uf)
    \end{align}
  \end{subequations}
  Both solutions are uniformly bounded in time
  \begin{equation}\label{boundflowperiodic}
    \sup_{t\in [0,T]}|v_\uf(t)| \le C,\quad 
    \sup_{t\in [0,T]}\Big(\|\vt_\uf(t)\|_{H^2(\Omega)} +
    \|p_{\uf}(t)\|_{H^1(\Omega)}\Big) \le C. 
  \end{equation}
\end{assumption}
For the ODE problem~(\ref{periodic:simple}), the existence of a unique
periodic solutions follows from the evolution of $w(t)=v(t+1)-v(t)$
which fulfills $w'+\lambda(u)w=0$ and thus vanishes, see~\cite{Richter2020}. 
For a discussion on the Navier-Stokes equations we refer to
Section~\ref{sec:relevance}.

\subsection{Derivation of an effective equation}

We introduce the averaged concentration
\[
U(t) \coloneqq \int_{t}^{t+1} \uf(s)\,\text{d}s.
\]
Using (\ref{problem:coupled}) and (\ref{problem:simple}), respectively, 
and inserting $\pm R\big(U(t),\vt(s)\big)$, we have
\[
U'(t)=\int_{t}^{t+1}\epsilon R\big(\uf(s),\vt(s)\big)\,\text{d}s
= \int_{t}^{t+1}\epsilon R\big(U(t),\vt(s)\big)\,\text{d}s
- \int_{t}^{t+1}\epsilon\left(R\big(U(t),\vt(s)\big) -
R\big(\uf(s),\vt(s)\big)\right)\, \text{d}s. 
\]

\begin{lemma}[Averaging error]\label{lemma:average}
  Let $u\in C^1([0,T])$, $\vt\in C([0,T];X)$ and let
  Assumption~\ref{ass:reaction} be satisfied. Then, it holds
  \[
  \max_t \Big| \int_{t}^{t+1} \left(
  R(U(t), \vt(s))-R(u(s),\vt(s))
  \right)\,\text{d}s\Big|\le C \epsilon,
  \]
  with a constant $C>0$ that depends on
  Assumption~\ref{ass:reaction}. 
\end{lemma}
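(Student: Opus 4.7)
The plan is short: reduce the claim to a pointwise bound on $|U(t)-u(s)|$ for $s\in[t,t+1]$ and then exploit the fact that the concentration $u$ evolves only on the slow scale.

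First I would apply Lipschitz continuity of $R$ in its first argument (inequality \eqref{ass:reaction:2} from Assumption~\ref{ass:reaction}) to move the difference from $R$ onto its argument:
\[
\Big|\int_t^{t+1}\!\!\big(R(U(t),\vt(s))-R(u(s),\vt(s))\big)\,\text{d}s\Big|
\le C_{A\ref{ass:reaction}b}\int_t^{t+1}\!|U(t)-u(s)|\,\text{d}s.
\]
Since $\vt(s)$ sits in the second slot in both terms, no assumption on the regularity of $\vt$ in time is needed here; only boundedness of $u$ matters.

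Next I would bound $|U(t)-u(s)|$ uniformly for $s\in[t,t+1]$. Using the definition $U(t)=\int_t^{t+1}u(r)\,\text{d}r$ (the integrand has total mass $1$ over the window) and writing $u(r)-u(s)=\int_s^r u'(\xi)\,\text{d}\xi$, the ODE $u'=\epsilon R(u,\vt)$ together with the boundedness estimate \eqref{ass:reaction:1} gives $|u'|\le \epsilon C_{A\ref{ass:reaction}a}$. Since $|r-s|\le 1$ on $[t,t+1]$, this yields
\[
|U(t)-u(s)|\le \int_t^{t+1}|u(r)-u(s)|\,\text{d}r \le \epsilon C_{A\ref{ass:reaction}a}.
\]
Combining the two estimates produces the desired bound with constant $C=C_{A\ref{ass:reaction}a}C_{A\ref{ass:reaction}b}$, which is independent of $\epsilon$, $t$, and the specific choice of $\vt$.

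There is no serious obstacle in this argument; the key observation is simply that the slow evolution $|u'|={\cal O}(\epsilon)$ makes $u$ nearly constant over a window of length one, so the difference between $u(s)$ and its window average $U(t)$ is of order $\epsilon$, and Lipschitz continuity of $R$ transports this into the claim. The only small subtlety is to use the averaged quantity $U(t)$ (not $u(t)$) on the left, which is precisely what allows the estimate to proceed via $u(r)-u(s)$ rather than via $u(s)-u(t)$; but since $s,r\in[t,t+1]$ both give $|r-s|\le 1$, nothing more than the mean value theorem is required.
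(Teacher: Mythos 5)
Your proof is correct and follows essentially the same route as the paper: Lipschitz continuity of $R$ in the first argument reduces the claim to bounding $\int_t^{t+1}|U(t)-u(s)|\,\text{d}s$, which both you and the paper control by writing $u(r)-u(s)=\int_s^r u'(\xi)\,\text{d}\xi$ and using $|u'|\le C_{A\ref{ass:reaction}a}\epsilon$ from the slow equation. The only cosmetic difference is that you establish the pointwise bound $|U(t)-u(s)|\le C\epsilon$ before integrating in $s$, whereas the paper keeps the nested integrals; the content is identical.
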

\begin{proof}
  By Lipschitz continuity of $R(\cdot,\cdot)$ in the first
  argument~(\ref{ass:reaction:2}) it holds
  \begin{equation}\label{R1}
    |R(U(t),\vt(s))-R(\uf(s),\vt(s))|
    \le C | \uf(s)-U(t)|.
  \end{equation}
  We estimate
  \begin{equation}\label{R2}
    \int_{t}^{t+1} |\uf(s)-U(t)|\,\text{d}s 
  =\int_{t}^{t+1}
  \left|\int_t^{t+1}(\uf(s)-\uf(r))\,\text{d}r\right|\,\text{d}s
  =\int_t^{t+1}
  \left|\int_t^{t+1}\int_r^s
  \uf'(x)\,\text{d}x\,\text{d}r\right|\,\text{d}s 
  \le C\epsilon,
  \end{equation}
  where we used~(\ref{ass:reaction:1}), 
  such that a combination of \eqref{R1} and \eqref{R2} shows the assertion. 
\end{proof}

\noindent We can thus approximate the averaged evolution equation for
$U$ by 
\begin{equation}\label{barkappa:1}
  U'(t) = \int_t^{t+1}\epsilon R(U(t),\vt(s))\,\text{d}s + 
  \O{\epsilon^2}.  
\end{equation}
%
% Due to the nonlinearity of $R(\cdot,\cdot)$, the micro scale variations
% in $\vt(s)$ can not simply be averaged. Moreover, equation~(\ref{barkappa:1}) can not be used
% for an efficient long time-step discretisation $T_{n-1}\to
% T_{n}=T_{n-1}+K$, as it would require knowledge of the solution
% $\vt(T_{n})$ which dynamically evolves from $\vt(T_{n-1})$ in $K/k$
% micro steps.
% This is the fundamental dilemma of
% temporal multiscale methods. At the heart of this problem is the lack of
% initial values for a local reconstruction of the fast scale problem at
% a given discrete long time step $T_{n-1}\to T_{n}$. 
% To overcome this problem we introduce the time-periodic solution
% $(\vt_U,p_U)$ to the Navier-Stokes equation for a fixed
% value $U=U(t)$ of the slow variable, see (\ref{periodic}),
% and~(\ref{periodic:simple}) in the case of the ODE system respectively. 

The benefit of introducing the periodic solution lies in a localisation of the
fast scale influences. Given an approximation $U_n$ at time $T_n$, the
micro scale influence $(\vt_{U_n},p_{U_n})$ can
be determined independent of the last approximation
$(\vt_{U_{n-1}},p_{U_{n-1}})$. 
We approximate the averaged equation~(\ref{barkappa:1}) by inserting
the periodic solution $\vt_{U(t)}(s)$ for a fixed value $U(t)$ 
\begin{equation}\label{barkappa:2}
  U'(t) = \int_t^{t+1}
  \epsilon R(U(t),\vt_{U(t)}(s))\,\text{d}s
  + \int_t^{t+1}
  \epsilon\big(R(U(t),\vt(s))-
  R(U(t),\vt_{U(t)}(s))\big)\,\text{d}s + \O{\epsilon^2}.
\end{equation}
We will show that the second remainder 
\begin{equation}\label{barkappa:bound}
  \max_t \Big| \int_t^{t+1}\epsilon \big(R(U(t),\vt(s))-
  R(U(t),\vt_{U(t)}(s))\big)\,\text{d}s\Big|
  %= \O{\epsilon^2}
\end{equation}
is also of order $\O{\epsilon^2}$. \sfrei{The analysis is presented
for the ODE system \eqref{problem:simple} in the following section (Lemma~\ref{lemma:periodic},~\ref{lemma:approxv}
and~\ref{lemma:conterror}). Extensions to the full system \eqref{problem:coupled} are discussed in Section~\ref{sec:num}.}
Having shown that the average $U(t)$ satisfies the equation
\begin{equation}\label{barkappa:2.5}
  U'(t) = \int_t^{t+1}
  \epsilon R\big(U(t),\vt_{U(t)}(s)\big)\,\text{d}s + \O{\epsilon^2}
\end{equation}
we define the effective equation for the approximation of $U(t)$ by
neglecting the remainder of order ${\cal O}(\epsilon^2)$, i.e. by the
equation 
\begin{equation}\label{barkappa:3}
  U'(t) = \int_t^{t+1}\epsilon
  R\big(U(t),\vt_{U(t)}(s)\big)\,\text{d}s,\quad
  U(0) = u_0. 
\end{equation}
%
%Strictly speaking,~(\ref{barkappa:2.5}) and~(\ref{barkappa:3}) define
%different functions $U(t)$. 
In Lemma~\ref{lemma:conterror}, we 
will estimate the error resulting from skipping the remainder
${\cal O}(\epsilon^2)$ in~(\ref{barkappa:2.5}). 
We further note that the initial values $U(0)=u_0$ and the averaged
initial $\int_0^1 u(s)\,\text{d}s$  do not necessarily 
coincide. Instead,~(\ref{barkappa:3}) deals with an offset of order
${\cal O}(\epsilon)$:
\begin{equation}\label{initialerror}
  \int_0^1 u(t)\,\text{d}t =
  \int_0^1 u(0)+\int_0^s u'(s)\,\text{d}s\,\text{d}t
  = u_0 + \O{\epsilon}. 
\end{equation}

\subsection{Analysis \sfrei{of the averaging error for} the model problem}\label{analysis}

In this section, we outline the ideas for showing convergence of the
multiscale scheme, Algorithm~\ref{algo:scheme}. As mentioned above the
analysis for the  
Navier-Stokes/ODE system is beyond the scope of this work. Instead we consider problem~(\ref{problem:simple}). 
One reason
is the lack of unique periodic solutions
$(\vt_{U(t)}(s),p_{U(t)}(s))$ for larger Reynolds numbers. Second,
the following analysis is based on the linearity of the model
problem. 
% , which we
% repeat for the sake of exposition: \sfrei{Ist das notwendig?} 
%
% \begin{subequations}
%   \begin{align}
%     \label{motms:kappa}
%     u(0)&=u_0,&\quad  u'(t) &= \epsilon R(u(t),v(t))\\
%     \label{motms:v}
%     v(0)&=v_0,&\quad v'(t) + \lambda(u(t))v(t) &= f(t),
%   \end{align}
% \end{subequations} 
% where $f$ is periodic with period $1$ and
% $\lambda(u)\ge \lambda_0>0$.
In addition to Assumptions~\ref{ass:reaction}
and~\ref{ass:periodic} we assume:
\begin{assumption}\label{ass:diff}
  We assume that the map $u\mapsto
  \lambda(u)$  is differentiable with a bounded derivative
  \begin{equation}\label{mot:3}
    \Big|\frac{d\lambda(u)}{d u}\Big|\le C_{A\ref{ass:diff}},\quad u\in
              [0,u_{max}].
  \end{equation}
\end{assumption}

Algorithm~\ref{algo:scheme} applied to the model problem (\ref{problem:simple}) calls for the
solution of the following averaged slow problem
\begin{equation}\label{motms:slow}
  U'(t) = \int_t^{t+1}\epsilon
  R\big(U(t),v_{U(t)}(s)\big)\,\text{d}s,\quad U(0)=u_0,
\end{equation}
and the corresponding time-periodic micro problems
\begin{equation}\label{motms:periodic}
  v_{U}'(t)+\lambda(U)v_{U}(t) =f(t),\quad
  v_U(1)=v_U(0),
\end{equation}
for each fixed parameter $0\le U\le u_{max}$. 
\begin{lemma}[Periodic solutions]\label{lemma:periodic}
  Let $0\le u\le u_{max}$ be fixed and let
  Assumption~\ref{ass:diff} hold. For the solution to the periodic
  problem~(\ref{motms:periodic}) it holds
  \begin{equation}\label{periodic:1}
    |v_u(t)| \le C_{L\ref{lemma:periodic}a}.
  \end{equation}
  Further, for $0\le u,\eta\le u_{max}$ let
  $v_u(t),v_\eta(t)$ be two such periodic solutions. It holds
  \begin{equation}\label{periodic:2}
    |v_u(t)-v_\eta(t)| \le C_{L\ref{lemma:periodic}b}
    |\lambda(u)-\lambda(\eta)|,
  \end{equation}
  where $C$ depends on $f$ and $\lambda_0>0$. 
\end{lemma}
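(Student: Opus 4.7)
The plan is to exploit the fact that \eqref{motms:periodic} is a linear first-order ODE with constant-in-$t$ coefficient $\lambda(u)$, so the periodic solution admits an explicit representation. Writing the variation-of-constants formula,
\[
v_u(t) = \mathrm{e}^{-\lambda(u)t}\,v_u(0) + \int_0^t \mathrm{e}^{-\lambda(u)(t-s)} f(s)\,\text{d}s,
\]
and imposing $v_u(1)=v_u(0)$ determines the initial value as
\[
v_u(0) = \frac{1}{1-\mathrm{e}^{-\lambda(u)}}\int_0^1 \mathrm{e}^{-\lambda(u)(1-s)} f(s)\,\text{d}s.
\]
This formula (together with $\lambda(u)\ge\lambda_0>0$) is the core of both estimates.

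For part (a), I would estimate $|v_u(0)|\le \|f\|_\infty (1-\mathrm{e}^{-\lambda(u)})/(\lambda(u)(1-\mathrm{e}^{-\lambda(u)}))=\|f\|_\infty/\lambda(u)\le \|f\|_\infty/\lambda_0$, and then plug back into the variation-of-constants formula to bound $|v_u(t)|$ uniformly by some multiple of $\|f\|_\infty/\lambda_0$. Alternatively, and perhaps more robustly, one can test the ODE with $v_u$ and integrate over one period: the boundary term vanishes by periodicity, so $\lambda_0\|v_u\|_{L^2(0,1)}^2 \le \|f\|_{L^2(0,1)}\|v_u\|_{L^2(0,1)}$, yielding an $L^2$ bound that can be upgraded to an $L^\infty$ bound via the ODE itself. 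Either route delivers~\eqref{periodic:1} with a constant depending only on $f$ and $\lambda_0$.

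For part (b), I would introduce $w(t):=v_u(t)-v_\eta(t)$, which is periodic since both $v_u$ and $v_\eta$ are, and satisfies
\[
w'(t) + \lambda(u)\, w(t) = -\bigl(\lambda(u)-\lambda(\eta)\bigr)\, v_\eta(t),\quad w(1)=w(0).
\]
This is again a periodic linear problem of the same form as \eqref{motms:periodic}, but with right-hand side $-(\lambda(u)-\lambda(\eta))\,v_\eta$ in place of $f$. Applying the estimate from part (a) to $w$ and using \eqref{periodic:1} to bound $\|v_\eta\|_\infty$ gives
\[
|w(t)| \le \frac{\|v_\eta\|_\infty}{\lambda_0}\,|\lambda(u)-\lambda(\eta)| \le C_{L\ref{lemma:periodic}b}\,|\lambda(u)-\lambda(\eta)|,
\]
which is precisely \eqref{periodic:2}.

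I do not anticipate any genuine obstacle: the argument reduces to the standard representation of periodic solutions of scalar linear ODEs. The only point to handle with a little care is ensuring that the constants are tracked so that they depend only on $f$, $\lambda_0$ and the bound from part (a), but not on $\epsilon$ or on $u,\eta$ individually, as required by the generic-constant convention of the paper.
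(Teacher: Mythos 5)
Your proposal is correct and follows essentially the same route as the paper: the explicit variation-of-constants representation, the periodicity condition to pin down $v_u(0)$ with the bound $|v_u(0)|\le\|f\|_\infty/\lambda(u)$, and for part (b) the observation that $w=v_u-v_\eta$ solves a periodic problem of the same type with right-hand side $(\lambda(\eta)-\lambda(u))v_\eta$, so that part (a) applies twice. The only differences are cosmetic (an equivalent form of the integral kernel, and an optional energy-estimate alternative that the paper does not use).
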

\begin{proof}
  \emph{(i)} To show~(\ref{periodic:1}) we skip the index $u$ for better readability. 
  The general solution to the ODE is given by
  \begin{equation}
    \label{solution}
    v(t) = \euler^{-\lambda t}
    \Big(v(0) +  \int_0^t
    f(s)\euler^{\lambda s}\,\text{d}s\Big),
  \end{equation}
  which we estimate by
  \begin{equation}\label{p0}
    |v(t)|\le \euler^{-\lambda t} |v_0|+\frac{1}{\lambda}
    \|f\|_{L^\infty([0,1])}. 
  \end{equation}
  Since $v(t)$ is periodic, $v(1)=v(0)$, we obtain by~(\ref{solution})
  \begin{equation}\label{p1}
    v(0)=\frac{\euler^{-\lambda}}{1-\euler^{-\lambda}}
    \int_0^1 f(s)\euler^{\lambda s}\,\text{d}s
    \quad\Rightarrow\quad
    |v(0)|\le \frac{1}{\lambda}    \|f\|_{L^\infty([0,1])}
  \end{equation}
  Inserting~(\ref{p1}) into~\eqref{p0} we get for all $t\in [0,1]$
  \begin{equation}\label{p1.5}
    |v(t)| \le
    \frac{1+\euler^{-\lambda t}}{\lambda}
    \|f\|_{L^\infty([0,1])} \le
    \frac{2}{\lambda} \|f\|_{L^\infty([0,1])}, 
  \end{equation}
  which gives~(\ref{periodic:1}) since $\lambda \ge \lambda_0$. 

  \noindent\emph{(ii)} Let $w(t):=v_u(t)-v_\eta(t)$. It holds
  \[
  w'(t) + \lambda(u)w(t) = \big(\lambda(\eta)-\lambda(u)\big)
  v_\eta(t),\quad
  w(1)=w(0)=v_u(0)-v_\eta(0). 
  \]
  Note that the right-hand side of this ODE is time-periodic. Hence,
  we use~(\ref{p1.5}) twice and obtain the estimate
  \[
  |w(t)| \le \frac{2}{\lambda(u)}|\lambda(\eta)-\lambda(u)|
  \max_{t\in[0,1]} |v_\eta(t)|
  \le \frac{4}{\lambda(u)\lambda(\eta)}|\lambda(\eta)-\lambda(u)|
  \|f\|_{L^\infty([0,1])}.
  \]
\end{proof}

{The following essential lemma sets the foundation for
  replacing the dynamic fast component $v(t)$ by localised periodic in
  time solutions. For a given slow function $u(t)$ we will compare the
  corresponding dynamic fast scale $v(t)$ with the family of periodic
  solutions $v_{u(t)}(s)$.}
\begin{lemma}\label{lemma:approxv}
  Let $u\in C([0,T])$ be given with
  \begin{equation}\label{la:1}
    u(0)=0,\quad 0\le u'(t)\le C_{A\ref{ass:reaction}a}\epsilon\quad t\in[0,T].
  \end{equation}
  Then, let $v(t)$ be the dynamic solution to~(\ref{motms:v}), i.e.
  \begin{equation}\label{la:2}
    v(0)=v_0,\quad \partial_t v(t) +\lambda\big(u(t)\big) v(t) = f(t)
    \text{ for }t\in [0,T]
  \end{equation}
  and let $v_{u(t)}(s)$ be the family of time-periodic solutions to
  \begin{equation}\label{la:3}
    v_{u(t)}(0)=v_{u(t)}(1)\quad \partial_s v_{u(t)}(s)
    +\lambda\big(u(t)\big) v_{u(t)}(s) = f(s)
    \text{ for }s\in [0,1]\text{ and for all }t\in [0,T]. 
  \end{equation}
  Finally, let $v_0=v_{u(0)}(0)$, i.e. the initial values
  to~(\ref{la:2}) and~(\ref{la:3}) at time $t=0$ agree. Let
  $\lambda(\cdot)$ satisfy 
  Assumption~(\ref{ass:diff}). Then it holds
  \[
  |v(t)-v_{u(t)}(t)| \le C_{L\ref{lemma:approxv}} \epsilon.
  \]
  with a constant $C>0$ that depends on $f$, $\lambda_0$ and on
  Assumptions~\ref{ass:reaction} and~\ref{ass:diff}. 
\end{lemma}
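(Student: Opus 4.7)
The plan is to derive a linear ODE for the difference $w(t) := v(t) - v_{u(t)}(t)$ that has a right-hand side of order $\epsilon$, and then exploit the coercivity $\lambda(u) \ge \lambda_0 > 0$ together with the vanishing initial value to conclude.

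First, I would carefully differentiate $t \mapsto v_{u(t)}(t)$. Viewing the periodic solution family as $V(\xi,s) := v_\xi(s)$, we have $v_{u(t)}(t) = V(u(t),t)$, so by the chain rule
\begin{equation*}
\frac{d}{dt} v_{u(t)}(t) = \partial_s V(u(t),t) + \partial_\xi V(u(t),t)\, u'(t).
\end{equation*}
The first term evaluates to $f(t) - \lambda(u(t))\,v_{u(t)}(t)$ by the periodic equation~(\ref{la:3}) at $s=t$. Subtracting from~(\ref{la:2}), the forcing $f(t)$ and the term $\lambda(u(t))$ times $v_{u(t)}(t)$ combine cleanly and we obtain the linear ODE
\begin{equation*}
w'(t) + \lambda(u(t))\,w(t) = -\partial_\xi V(u(t),t)\,u'(t), \qquad w(0) = v_0 - v_{u(0)}(0) = 0.
\end{equation*}

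The key step is then to bound $\partial_\xi V(\xi,s) = \partial_\xi v_\xi(s)$ uniformly. For this I would differentiate the estimate of Lemma~\ref{lemma:periodic}(ii): it yields $|v_{\xi_1}(s) - v_{\xi_2}(s)| \le C_{L\ref{lemma:periodic}b}\,|\lambda(\xi_1) - \lambda(\xi_2)|$, and Assumption~\ref{ass:diff} then gives $|\lambda(\xi_1) - \lambda(\xi_2)| \le C_{A\ref{ass:diff}}|\xi_1 - \xi_2|$. Dividing and passing to the limit (or arguing directly via difference quotients, which is legitimate because $v_\xi$ depends smoothly on the parameter via the explicit representation in the proof of Lemma~\ref{lemma:periodic}) produces the uniform bound $|\partial_\xi v_\xi(s)| \le C$ on $[0,u_{max}] \times [0,1]$. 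Combined with the hypothesis $|u'(t)| \le C_{A\ref{ass:reaction}a}\epsilon$ from~(\ref{la:1}), the right-hand side of the ODE for $w$ is bounded pointwise by $C\epsilon$.

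To conclude, I would use the integrating factor for the linear scalar ODE: with $\Lambda(t,s) := \int_s^t \lambda(u(r))\,dr \ge \lambda_0(t-s)$, the representation $w(t) = -\int_0^t e^{-\Lambda(t,s)}\,\partial_\xi v_{u(s)}(s)\,u'(s)\,ds$ gives
\begin{equation*}
|w(t)| \le C\epsilon \int_0^t e^{-\lambda_0(t-s)}\,ds \le \frac{C}{\lambda_0}\,\epsilon,
\end{equation*}
which is the claim. The main obstacle I foresee is the chain-rule computation and the justification of the pointwise bound on the parameter derivative $\partial_\xi v_\xi(s)$; once these are handled, the remaining Gronwall-type argument is standard and benefits from the dissipativity $\lambda(u) \ge \lambda_0$ so that no exponentially growing constants enter, keeping the estimate independent of the long time horizon $T = \mathcal{O}(\epsilon^{-1})$.
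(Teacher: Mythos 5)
Your proposal is correct and follows essentially the same route as the paper: the same decomposition $w(t)=v(t)-v_{u(t)}(t)$, the same chain-rule computation producing the ODE $w'+\lambda(u(t))w=-\partial_\xi v_{u(t)}(t)\,u'(t)$ with $w(0)=0$, the same bound on the parameter derivative via Lemma~\ref{lemma:periodic}(ii) combined with Assumption~\ref{ass:diff}, and the same integrating-factor representation to conclude. The only cosmetic difference is that you make the $T$-independence via the dissipation bound $\int_0^t e^{-\lambda_0(t-s)}\,ds\le\lambda_0^{-1}$ explicit, which the paper leaves implicit.
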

\begin{proof}
  For $v_{u(t)}(t)$ it holds by the chain rule
  \[
  \frac{d}{d_t} v_{u(t)}(t) = v_{u(t)}'(t) + \frac{d
    v_{u(t)}}{du(t)}(t)u'(t), 
  \]
  such that $v_{u(t)}(t)$ is governed by
  \begin{align*}
    \partial_t v_{u(t)}(t) +
    \frac{dv_{u(t)}}{du(t)}(t)u'(t)+ \lambda\big(u(t)\big)
    v_{u(t)}(t) =0 ,\quad v_{u(0)}(0)=v_0. 
  \end{align*}
  Thus, it holds for the difference $w(t):=v(t)-v_{u(t)}(t)$
  \[
  \partial_t w(t) + \lambda\big(u(t)\big) w(t) =
  -\frac{d v_{u(t)}}{du(t)}(t)u'(t),\quad w(0)=0 
  \]
  with the solution
  \begin{equation}\label{diff:solution}
    w(t) = 
    -\int_0^t \frac{d v_{u(s)}}{du(s)}(s)
    u'(s)\exp\Big(-\int_s^t\lambda\big(u(r)\big)\,\text{d}r
    \Big)\,\text{d}s.
  \end{equation}
  To estimate the derivative $\frac{dv_{u(s)}}{du(s)}$ we
  consider two such 
  time-periodic solutions $v_{u(t)}$ and $v_{\eta(t)}$ for fixed $0\le
  u,\eta\le u_{max}$. We estimate their 
  distance by~(\ref{periodic:2}) in Lemma~\ref{lemma:periodic}
  \begin{equation}\label{approxv:11}
    \frac{|v_{u}-v_{\eta}|}{|u-\eta|} \le
    \frac{4}{\lambda_0^2} \|f\|_{L^\infty([0,1])} 
    \Big|\frac{\lambda(u)-\lambda(\eta)}
              {u-\eta}\Big|.
  \end{equation}
  This bound is uniform in $u,\eta$ and $t$, such that
  differentiability of  $\lambda(u)$,~(\ref{mot:3}) gives
  \[
  \Big|\frac{dv_{u(t)}}{du(t)}(t)\Big|=
  \lim_{\eta\to u}\frac{|v_{u(t)}-v_{\eta(t)}|}{|u-\eta|}
  \le \frac{4 C_{A\ref{ass:diff}}}{\lambda_0^2} \|f\|_{L^\infty([0,1])}
  \]
  This allows to estimate~(\ref{diff:solution}) by
  \[
  |w(t)|=|v(t)-v_{u(t)}(t)| \le \frac{4C_{A\ref{ass:diff}} C_{A\ref{ass:reaction}a}}{\lambda_0^2}
  \|f\|_{L^\infty([0,1])}  \epsilon. 
  \]
\end{proof}

%%%%%%%%%%%%%%%%%%%%%%%%%%%%%%%%%%%%%%%%%%%%%%%%%%

{In the previous lemma we investigated the coupling
  from a fixed slow variable $u(t)$  to the fast components $v(t)$ and
  $v_{u(t)}(t)$. This last lemma will study the different evolutions
  of the slow variable $u(t)$ governed
  by~(\ref{motms:kappa}),~(\ref{motms:v}) and of the averaged variable
  $U(t)$ that is determined by equation~(\ref{motms:slow}) with
  periodic micro influences~(\ref{motms:periodic}). }  

\begin{lemma}\label{lemma:conterror}
  Let $(u(t),v(t))$ and $(U(t),v_{U(t)}(t))$ be 
  defined by~(\ref{motms:kappa}), (\ref{motms:v}) 
  and~(\ref{motms:slow}), (\ref{motms:periodic}), respectively, with 
  the initial values $u(0)=U(0)=0$ and $v(0)= v_{U(0)}(0)$. For
  $0\le t\le T=\O{\epsilon^{-1}}$  it holds
  \[
  |U(t)-u(t)| \le C\epsilon,
  \]
  with a constant $C>0$ that depends on the constants from
  Lemma~\ref{lemma:periodic}, \ref{lemma:approxv} and \ref{lemma:conterror}, as well as
  Assumption~\ref{ass:reaction}. 
\end{lemma}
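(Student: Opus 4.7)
The plan is to introduce the auxiliary function $\bar u(t)\coloneqq \int_t^{t+1}u(s)\,\text{d}s$ and to compare it separately to $u$ (via a direct averaging estimate) and to $U$ (via a Gronwall stability argument), combining both with the triangle inequality. The direct estimate $|\bar u(t)-u(t)|\le C\epsilon$ is immediate from $|u'|\le C_{A\ref{ass:reaction}a}\epsilon$. The initial offset is controlled by \eqref{initialerror}, giving $|U(0)-\bar u(0)|=|u_0-\bar u(0)|\le C\epsilon$, so both remaining tasks reduce to bounding $|U(t)-\bar u(t)|$ on $[0,T]$ with $T={\cal O}(\epsilon^{-1})$.

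The first main step is to derive an approximate effective equation for $\bar u$ that matches \eqref{motms:slow} up to a residual of order ${\cal O}(\epsilon^2)$. Differentiating the definition of $\bar u$ yields
\[
\bar u'(t)=\int_t^{t+1}\epsilon R\big(u(s),v(s)\big)\,\text{d}s.
\]
Lemma~\ref{lemma:average} lets me replace $u(s)$ by $\bar u(t)$ at the cost of an ${\cal O}(\epsilon^2)$ term. To then replace $v(s)$ by $v_{\bar u(t)}(s)$, I split
\[
\big|v(s)-v_{\bar u(t)}(s)\big|\le \big|v(s)-v_{u(s)}(s)\big|+\big|v_{u(s)}(s)-v_{\bar u(t)}(s)\big|,
\]
apply Lemma~\ref{lemma:approxv} to the first summand and estimate the second by Lemma~\ref{lemma:periodic}(ii) together with Assumption~\ref{ass:diff}, bounding $|\lambda(u(s))-\lambda(\bar u(t))|\le C|u(s)-\bar u(t)|\le C\epsilon$ using both $|u'|={\cal O}(\epsilon)$ and $|u(t)-\bar u(t)|={\cal O}(\epsilon)$. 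Lipschitz continuity of $R$ in its second argument then absorbs this into another ${\cal O}(\epsilon^2)$ residual, yielding
\[
\bar u'(t)=\int_t^{t+1}\epsilon R\big(\bar u(t),v_{\bar u(t)}(s)\big)\,\text{d}s+r(t),\quad |r(t)|\le C\epsilon^2.
\]

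The second step is a Gronwall argument on $E(t)\coloneqq U(t)-\bar u(t)$. Subtracting the derived equation from \eqref{motms:slow}, I bound the integrand by
\[
\big|R(U,v_{U(t)}(s))-R(\bar u,v_{\bar u(t)}(s))\big|\le C|E(t)|+C\big|v_{U(t)}(s)-v_{\bar u(t)}(s)\big|\le C|E(t)|,
\]
again by the Lipschitz estimates of Assumption~\ref{ass:reaction}, Lemma~\ref{lemma:periodic}(ii) and Assumption~\ref{ass:diff}. This gives $|E'(t)|\le C\epsilon|E(t)|+C\epsilon^2$ with $|E(0)|\le C\epsilon$. Gronwall's lemma then yields
\[
|E(t)|\le \big(|E(0)|+C\epsilon\cdot \epsilon t\big)\exp(C\epsilon t),
\]
and for $t\le T={\cal O}(\epsilon^{-1})$ both $\epsilon t$ and the exponential factor are ${\cal O}(1)$, producing $|E(t)|\le C\epsilon$. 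A final triangle inequality $|U(t)-u(t)|\le |E(t)|+|\bar u(t)-u(t)|\le C\epsilon$ closes the proof.

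The main obstacle is the derivation of the residual bound $|r(t)|={\cal O}(\epsilon^2)$ rather than only ${\cal O}(\epsilon)$: one $\epsilon$ sits in front of $R$, while the second must be produced by combining the averaging lemma with the pointwise approximation of the dynamic fast variable by the family of periodic solutions. The bootstrap-looking estimate of $|v(s)-v_{\bar u(t)}(s)|$ closes without circularity because Lemma~\ref{lemma:approxv} depends only on the true slow variable $u$, not on $U$. Once the residual is quadratic in $\epsilon$, the long-time integration $\epsilon\cdot T={\cal O}(1)$ exactly balances the amplification in Gronwall and delivers the optimal linear bound.
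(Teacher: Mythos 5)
Your proof is correct and follows essentially the same route as the paper: both introduce $\bar u(t)=\int_t^{t+1}u(s)\,\text{d}s$, bound $E=U-\bar u$ via the Lipschitz property of $R$, Lemma~\ref{lemma:periodic}(ii), Lemma~\ref{lemma:approxv} and Assumption~\ref{ass:diff}, arrive at the differential inequality $|E'|\le C\epsilon(|E|+\epsilon)$ with $|E(0)|=\O{\epsilon}$, and close with Gronwall and a final triangle inequality. The only difference is organisational: you first establish the perturbed effective equation for $\bar u$ (comparing $v(s)$ to $v_{\bar u(t)}(s)$, both anchored at $u$) and then run a stability argument between $U$ and $\bar u$, whereas the paper inserts $\pm R(U(t),v(s))$ and $\pm v_{u(s)}(s)$ directly into the equation for $E'$ — the ingredients and the resulting inequality are identical.
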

\begin{proof}
  We introduce
  \[
  w(t)\coloneqq  U(t)-\int_{t}^{t+1} u(s)\,\text{d}s,
  \]
  which is governed by
  \[
  w'(t)=\int_t^{t+1}\epsilon\Big( R\big(U(t),v_{U(t)}(s)\big)
  -R\big(u(s),v(s)\big)\Big)\,\text{d}s,\quad
  w(0)=U(0)-\int_0^1u(s)\,\text{d}s \eqqcolon w_0.
  \]
  The initial error is small,  $|w_0| = \O{\epsilon}$,
  compare~(\ref{initialerror}). 
  We insert $\pm R\big(U(t),v(s)\big)$
  \begin{equation}\label{conterror:1}
    w'(t) =
    \int_{t}^{t+1}\epsilon
    \Big(R\big(U(t),v_{U(t)}(s)\big)- R\big(U(t),v(s)\big)\Big)
    + \epsilon\Big(R\big(U(t),v(s)\big) - R\big(u(s),v(s)\big)\Big)
    \,\text{d}s.
  \end{equation}
  Lipschitz continuity of $R(\cdot,\cdot)$,
  Assumption~\ref{ass:reaction}, gives 
  \begin{equation}\label{conterror:2}
    |w'(t)| \le C_{A\ref{ass:reaction}b} \epsilon \int_t^{t+1}
    |v_{U(t)}(s)-v(s)|\,\text{d}s
    +C_{A\ref{ass:reaction}b}\epsilon\int_t^{t+1}
    |U(t)-u(s)|\,\text{d}s.  
  \end{equation}
  The second term is estimated by inserting
  $\pm\int_t^{t+1}u(r)\,\text{d}r$ and by using $|u'|\le C_{A\ref{ass:reaction}a} \epsilon$
  \begin{multline}\label{conterror:3}
    \int_t^{t+1} |U(t)-u(s)|\,\text{d}s
    \le
    \int_t^{t+1}\left|U(t)-\int_t^{t+1}u(r)\,\text{d}r
    \right|\,\text{d}s +
    \int_t^{t+1}\left| \int_t^{t+1}\big(   
    u(r)-u(s)\big)\,\text{d}r\right|\,\text{d}s  \\
    = \int_t^{t+1}|w(t)|\,\text{d}s
    +\int_t^{t+1}\left|\int_t^{t+1}
    \int_s^r u'(x)\,\text{d}x\,\text{d}r\right|\,\text{d}s
    \le |w(t)| + C_{A\ref{ass:reaction}a} \epsilon
  \end{multline}
  To estimate the first term in~(\ref{conterror:2}) we introduce $\pm
  v_{u(s)}(s)$ and use Lemma~\ref{lemma:periodic}, 
  Lemma~\ref{lemma:approxv}, Assumption~\ref{ass:diff}
  (differentiability of $\lambda(u)$) and finally~(\ref{conterror:3})   
  \begin{multline}\label{conterror:4}
    \int_t^{t+1} |v_{U(t)}(s)-v(s)|\,\text{d}s\le
    \int_t^{t+1}\Big(
    |v_{U(t)}(s)-v_{u(s)}(s)| + |v_{u(s)}(s)-v(s)|
    \Big)\,\text{d}s\\
    \le
    C_{L\ref{lemma:periodic}b}C_{A\ref{ass:diff}}\int_t^{t+1}|U(t)-u(s)|\,
    \text{d}s +C_{L\ref{lemma:approxv}} \epsilon  
    \le
    C_{L\ref{lemma:periodic}b}C_{A\ref{ass:diff}}
    \big(|w(t)|+C_{A\ref{ass:reaction}a}\epsilon\big)+
    C_{L\ref{lemma:approxv}} \epsilon   
  \end{multline}
  With
  $C=C(C_{A\ref{ass:reaction}a},C_{A\ref{ass:reaction}b},C_{A\ref{ass:diff}}, 
  C_{L\ref{lemma:periodic}a},C_{L\ref{lemma:periodic}b},C_{L\ref{lemma:approxv}})$ 
  we combine~(\ref{conterror:1})-(\ref{conterror:4}) to find the 
  relation 
  \[
  -C\left( \epsilon + |w(t)|\right)\epsilon \le w'(t) \le
  C\left(\epsilon + |w(t)|\right)\epsilon.
  \]
  An estimate for $|w(t)|$ follows by
  the a bound of the solution  to the corresponding ODE with initial
  value $w(0) = w_0$, where $|w_0|\le C_{\ref{ass:reaction}a}\epsilon$  
  \begin{equation}\label{conterror:5}
   % \left|\bar u(t) - \int_{t-1}^t u(s)\,\text{d}s\right|=
    |w(t)| \le C\epsilon \euler^{C\epsilon t}, 
  \end{equation}
  which satisfies $|w(t)|=\O{\epsilon}$ for $t\le T = \O{\epsilon^{-1}}$.
  Finally, 
  \[
  |U(t)-u(t)| \le 
  |w(t)| +
  \left|\int_t^{t+1} u(s)-u(t)\,\text{d}s\right|
  \le C\epsilon.
  \]
\end{proof}

%

%%%%%%%%%%%%%%%%%%%%%%%%%%%%%%%%%%%%%%%%%%%%%%%%%%
%%%%%%%%%%%%%%%%%%%%%%%%%%%%%%%%%%%%%%%%%%%%%%%%%%
%%%%%%%%%%%%%%%%%%%%%%%%%%%%%%%%%%%%%%%%%%%%%%%%%%
\section{Time discretisation}
\label{sec:timedisc}

{
In this section we introduce second-order time-stepping schemes to
approximate the coupled problem. As in the previous section,
where we derived the multiscale algorithm, we start with the full
plaque growth problem, equation~(\ref{problem:coupled}). Then, the
error analysis for estimating the discretisation error is based on the
simplified model equations~(\ref{problem:simple}).}

The discretisation is based on the second-order Adams-Bashforth scheme
for the slow scale 
and a Crank-Nicolson scheme for the fast scale. Both choices are
exemplarily and can in principle be substituted by any suitable
time-stepping scheme. We choose an explicit scheme for the slow scale
in order to avoid that several fast-scale problems have to be solved
in each time step, see Remark~\ref{rem.impl} below. 

\subsection{Second-order multiscale schemes}

First, we split the time interval $I=[0,T]$ into sub-intervals of
equal size
\begin{equation}\label{KN}
  0=T_0<T_1<\cdots<T_N,\quad K\coloneqq T_n-T_{n-1},
\end{equation}
We define approximations $U_n:=U(T_n)$ based on the second-order
Adams-Bashforth multistep method  
\begin{align}
  \label{MS:AB}
  (AB) && \frac{U_{n+1}-U_n}{K} &=
  \frac{3}{2} \int_0^1\epsilon R(U_n,\vt_{U_n;k})\, \text{d}s
  -\frac{1}{2} \int_0^1\epsilon R(U_{n-1},\vt_{U_{n-1};k}) \, \text{d}s. 
\end{align}
In order to compute the required starting value $U_1$ for the
Adams-Bashforth scheme, we put one forward Euler step at the beginning
of the iteration, which is sufficient to obtain second-order
convergence.

These schemes are formally explicit, they depend however on the
averaged fast scale influences
$R(U_n,\vt_{U_n;k})$
To compute these terms, we introduce a (for simplicity again uniform)
temporal subdivision of the fast periodic interval $I_P=[0,1]$ of step
size $k$
\begin{equation}\label{kn}
  0=t_0<t_1<\cdots<t_{M}=1,\quad k\coloneqq t_m-t_{m-1}. 
\end{equation}
Given a fixed value $0\le U\le u_{max}$, we introduce the notation 
$\vt_{U,m}\coloneqq\vt_{U;k}(t_m)$ and we approximate the periodic
solution on the fast scale with the Crank-Nicolson time-stepping
scheme
\begin{multline}\label{CN:flow}
  \nabla\cdot \vt_{U,m}=0,\quad
  \rho k^{-1}\big(\vt_{U,m}-\vt_{U,m-1}\big)
  +\frac{\rho}{2} \big(
  (\vt_{U,m-1}\cdot\nabla)\vt_{U,m-1}
  +(\vt_{U,m}\cdot\nabla)\vt_{U,m}
  \big)\\
  - \frac{1}{2}\div \big(
  \sigmat(\vt_{U,m-1})+\sigmat(\vt_{U,m})
  \big)=\frac{1}{2}\big( \ft(t_{m-1})+\ft(t_m)\big)
  \quad m=1,\dots,M\\
  \text{such that } 
  |\vt^u_M-\vt^u_0|\le \tolP.
\end{multline}
Based on the approximations made in the previous section, we introduce
the following multiscale method: 

\begin{algo}[Explicit temporal multiscale method]\label{finalalgo}
  Given subdivisions~(\ref{KN}) and~(\ref{kn}) of $I=[0,T]$ and
  $I_P=[0,1]$. Let $U_0=0$. Iterate for $n=1,\dots,N$
  \begin{enumerate}
  \item For $U\coloneqq U_{n-1}$ solve the periodic
    problem~(\ref{CN:flow}) to obtain
    $(\vt_{U_{n-1},m},p_{U_{n-1},m})$ for $m=1,2,\dots,M$. 
  \item Compute the averaged feedback 
    \begin{equation}\label{alg:avg}
      R_{n-1}\coloneqq \frac{k}{M} \sum_{m=1}^M
      R(U_{n-1},\vt_{U_{n-1},m}). 
    \end{equation}
  \item Step forward $U_{n-1} \to U_n$ with the Adams-Bashforth
    method~(\ref{MS:AB}) 
    \begin{equation}\label{alg:AB}
      U_n\coloneqq  U_{n-1}+ \frac{3K}{2}\epsilon R_{n-1} -\frac{K}{2}\epsilon R_{n-2},
    \end{equation}
    or, in the first step, with the forward Euler method
    \begin{equation}\label{alg:FE}
      U_1\coloneqq U_0+K\epsilon R_0. 
    \end{equation}
  \end{enumerate}
\end{algo}

\begin{remark}%[Averaging the feedback]
  In practice we ensure in Step 1 that the solution is periodic up to a certain threshold
  $\|\vt^u_M-\vt^u_0\|\le \tolP$. The box-rule used to
  compute the averaged wall shear stress 
  in Step 2 of the algorithm is
  therefore equivalent to the second order trapezoidal rule (up to the
  small error ${\mathcal O}(k\,\tolP)$).
\end{remark}

The main computational cost comes from the approximation of the periodic
solutions $(\vt_{U,m},p_{u,m})$ for a fixed value of $U$. 
The efficient computation of these periodic problems is discussed below.

%in Section~\ref{sec:periodicproblem}. 

\begin{remark}[Implicit multiscale schemes]\label{rem.impl}
  We are considering the rather simple interaction of the
  Navier-Stokes equations with a scalar ODE. For more detailed
  models, for example a boundary PDE to model the spatially diverse
  accumulation of $u(x,t)$ along the boundary $\Gamma(u)$ or
  even a full PDE/PDE model considering dynamical fluid-structure
  interactions and a detailed modelling of the bio-chemical
  processes causing plaque growth as introduced by Yang, Neuss-Radu et
  al.~\cite{Yang2014,YangJaegerNeussRaduRichter2015}, stiffness issues
  may call for implicit discretisations of the equation for $u$. To
  realise an implicit multiscale method, e.g. based on the
  Crank-Nicolson scheme for both temporal scales an outer iteration must
  be introduced. We refer to~\cite{MizerskiRichter2020} for a first
  application of the multiscale scheme to a PDE/PDE coupled flow
  problem. 
\end{remark}

\subsection{Error analysis for the model problem}

We consider the system of equations \eqref{motms:kappa}-\eqref{motms:v}, its 
multiscale approximation \eqref{motms:slow}-\eqref{motms:periodic} 
and the discrete problem \eqref{CN:flow}-\eqref{alg:FE}. Concerning
the short-scale problem, we make the following assumption.

\begin{assumption}[Approximation of the periodic problem]
  \label{ass:approxcn}
  Let $\tolP>0$ be the tolerance for reaching periodicity.
  We assume that there exists a constant $C>0$ such that the
  Crank-Nicolson discretisation~(\ref{CN:flow}) to the flow problem
  satisfies the bound
  \[
  \|\vt_{U,M}-\vt_{U,0}\| +
  \max_{m=1,\dots,M}\|\vt_{U}(t_m)-\vt_{U,m}\|\le
  C k^2  + \tolP
  \]
  where  the constant $C$ in particular does not depend on $\epsilon$ and $U$.  
\end{assumption}

\begin{remark}[Approximation of the periodic problem]\label{rem.approxcn}
  Considering ODEs, the 
  error estimate for the trapezoidal scheme is standard and can
  be found in many textbooks. Applied to the Navier-Stokes equations
  optimal order estimates under realistic regularity assumptions are
  given in~\cite{HeywoodRannacher1990}. Similar estimates that also
  include second-order in time estimates for the pressure (which
  might be required for a stress-based feedback) are given
  in~\cite{SonnerRichter2019}.  
  For algorithms to control the periodicity error, we refer to Section~\ref{sec:periodic} below.
\end{remark}

\begin{lemma}[Regularity of the solution]\label{lemma:regularity}
  Let $U(t)$ be the solution to \eqref{motms:slow} for $v_U\in C(0,T)$
  and $0\leq U\leq u_{\max}$. Moreover, let the map $U \mapsto
  \lambda(U)$  be twice differentiable with bounded second
  derivatives. It holds
  \[
  U\in C^3(I),\quad \max_{[0,T]}|U''| =
  \O{\epsilon^2},\quad \max_{[0,T]}|U'''| =
  \O{\epsilon^3}
  \]
\end{lemma}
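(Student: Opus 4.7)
The plan is to rewrite the equation for $U$ as an autonomous ODE of the form $U'(t)=\epsilon G(U(t))$ and then use the chain rule. Note that $f$ is $1$-periodic and, for each fixed $U$, the periodic solution $v_U$ extends to a $1$-periodic function of $s$. Hence, independently of $t$,
\[
\int_t^{t+1} R\bigl(U(t),v_{U(t)}(s)\bigr)\,\text{d}s
= \int_0^1 R\bigl(U(t),v_{U(t)}(s)\bigr)\,\text{d}s \eqqcolon G(U(t)),
\]
so that $U'(t)=\epsilon\,G(U(t))$. If I can show $G\in C^2([0,u_{max}])$ with $G,G',G''$ uniformly bounded, then the chain rule yields
\[
U''=\epsilon^2 G'(U)G(U),\qquad
U'''=\epsilon^3\bigl(G''(U)G(U)^2+G'(U)^2 G(U)\bigr),
\]
which is exactly the claim.

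The first step is to establish the $C^2$-dependence of the family $v_U$ on the parameter $U$. Using the explicit formula that already appeared in the proof of Lemma~\ref{lemma:periodic},
\[
v_U(s)= \euler^{-\lambda(U)s}\Bigl(v_U(0)+\int_0^s f(r)\euler^{\lambda(U)r}\,\text{d}r\Bigr),\qquad
v_U(0)=\frac{\euler^{-\lambda(U)}}{1-\euler^{-\lambda(U)}}\int_0^1 f(r)\euler^{\lambda(U)r}\,\text{d}r,
\]
I can view $v_U(s)$ as a composition of the smooth map $\lambda\mapsto v_\lambda(s)$ with $U\mapsto \lambda(U)$. Since $\lambda(U)\ge\lambda_0>0$, the map $\lambda\mapsto v_\lambda(s)$ is analytic in $\lambda$ with derivatives bounded uniformly in $s\in[0,1]$. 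Combined with the hypothesis that $\lambda$ is twice differentiable with bounded derivatives, this gives that $U\mapsto v_U(s)$ is $C^2$ and that $\frac{d v_U}{dU}$, $\frac{d^2 v_U}{dU^2}$ are bounded uniformly in $s\in[0,1]$ and $U\in[0,u_{max}]$.

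The second step is to differentiate under the integral sign. Because $R(u,v)=(1+u)^{-1}(1+v^2)^{-1}$ is smooth, and because $|v_U(s)|$ is uniformly bounded by Lemma~\ref{lemma:periodic}, the integrand $U\mapsto R(U,v_U(s))$ is $C^2$ with derivatives bounded uniformly in $s$. Dominated convergence justifies
\[
G'(U)=\int_0^1 \partial_u R(U,v_U(s))+\partial_v R(U,v_U(s))\tfrac{dv_U}{dU}(s)\,\text{d}s,
\]
and similarly for $G''(U)$, with $|G'|,|G''|\le C$ uniformly on $[0,u_{max}]$. Using also $|G|\le C_{A\ref{ass:reaction}a}$ from Assumption~\ref{ass:reaction}, the formulas for $U''$ and $U'''$ above immediately give the claimed bounds $|U''|=\O{\epsilon^2}$ and $|U'''|=\O{\epsilon^3}$, as well as $U\in C^3(I)$.

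The main obstacle is the uniform-in-$s$ control of the two derivatives $\tfrac{dv_U}{dU}$ and $\tfrac{d^2v_U}{dU^2}$: one needs to check that the factors $\tfrac{1-\euler^{-\lambda}}{\cdots}$ appearing after differentiating the explicit formula stay bounded, which is ensured by the strict positivity $\lambda(U)\ge\lambda_0$. Everything else is a routine application of the chain rule and dominated convergence.
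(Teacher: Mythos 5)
Your proof is correct and follows essentially the same route as the paper: periodicity of $v_{U(t)}$ removes the explicit $t$-dependence of the averaged reaction term (in the paper this appears as the cancellation of the boundary terms $R(U,v_U(1))-R(U,v_U(0))$), and the bounds then follow from the chain rule together with uniform bounds on $\tfrac{dv_U}{dU}$ and $\tfrac{d^2v_U}{dU^2}$, which the paper obtains from Lemma~\ref{lemma:periodic} and the twice differentiability of $\lambda$ and which you obtain, equivalently, from the explicit solution formula with $\lambda(U)\ge\lambda_0>0$. Your repackaging of the right-hand side as an autonomous function $G(U)$ with $U'=\epsilon G(U)$ is a slightly cleaner organization of the same computation, not a different argument.
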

\begin{proof}
Let us first note, that $U'$ is bounded due to the continuity of the
right-hand side $R\big(U(t),v_{U(t)}(s)\big)$ of \eqref{motms:slow}. 
Next, we consider the (total) temporal derivative of the right-hand
side. The chain rule gives 
\[
\begin{aligned}
  d_t \int_0^1 R\big(U(t),v_{U(t)}(s)\big) \,\text{d}s 
  = R\big(U(t),v_{U(t)}(1)\big) - R\big(U(t),v_{U(t)}(0)\big)
  + \int_0^1 d_t R\big(U(t),v_{U(t)}(s)\big) \,\text{d}s. 
\end{aligned}
\]
The first part vanishes due to the periodicity of $v_{U(t)}$.
For the second part we have with~(\ref{reaction:simple})
\[
d_t R\big(U(t),v_{U(t)}(s)\big)=
-\frac{ U'(t)}{1+U(t)^2} \int_0^1 \frac{1}{ 1+(v_{U(t)}(s))^2} \,\text{d}s
- \frac{1}{1+U(t)} \int_0^1 \frac{2
  v_{U(t)}(s) d_t v_{U(t)}(s)}{
  \left(1+(v_{U(t)}(s))^2\right)^2} \,\text{d}s 
\]
As in the proof of Lemma~\ref{lemma:approxv} we show
\begin{align*}
  \Big|d_t v_{U(t)}\Big| = \Big|\frac{d v_{U(t)}}{dU} U'\Big| ={\cal
    O}(\epsilon). 
\end{align*}
In combination with the bound $|U'(t)|\leq c\epsilon$, we obtain
\begin{align*}
  |U''(t)| = \Big| d_t \int_0^1\epsilon
  R\big(U(t),v_{U(t)}(s)\big) \,\text{d}s  \Big| \leq
  C\epsilon^2. 
\end{align*}
A similar argumentation yields for the third derivative
\begin{align*}
  |U'''(t)| = \Big| d_t^2 \int_0^1 \epsilon R\big(U(t),v_{U(t)}(s)\big)
  \,\text{d}s  \Big| = \Big| \int_0^1 d_t^2\epsilon R\big(U(t),v_{U(t)}(s)\big)
  \,\text{d}s  \Big| \leq C\epsilon^3, 
\end{align*}
where we have used that
\begin{align*}
  d_t^2 v_{U(t)} = \frac{d v_{U(t)}}{dU}  U''  +\frac{d^2v_{U(t)}}{d
    U^2}  U'^2  = {\cal O}(\epsilon^2), 
\end{align*}  
given that $\lambda(U)$ is twice differentiable in $U$.
\end{proof}

\begin{lemma}[Local approximation error of the effective equation]
  \label{lemma:Locdiscerror}
  Let $U\in C^3(I)$ be the solution
  to~(\ref{motms:slow})-(\ref{motms:periodic}), 
  $U_{K;k}\in \mathds{R}^{N+1}$ the approximation given by
  Algorithm~\ref{finalalgo}.
  For $T_n={\cal O}(\epsilon^{-1})$, the error $E_n := U(T_n) - U_{n;k}$
  is bounded by 
  \[
  |E_n| \le C\big( \epsilon^2 K^2 + k^2 +
  \tolP\big) 
  \]
  with a constant $C>0$ that does not depend on $\epsilon$, $K$, $k$
  and $\tolP$. 
\end{lemma}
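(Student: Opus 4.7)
The strategy is to decompose the total error into three independent contributions: (i)~the local truncation error of the second-order Adams-Bashforth step applied to a smooth slow evolution, (ii)~the local quadrature/Crank-Nicolson error from approximating the micro-average $\int_0^1 R(U,v_U(s))\,\text{d}s$ by $R_{n-1}$, and (iii)~the error of the Euler start-up step. For this I would introduce $G(U):=\int_0^1 R(U,v_U(s))\,\text{d}s$, so that the effective equation reads $U'(t)=\epsilon G(U(t))$, and let $\tilde G(U)$ denote its discrete counterpart $\tfrac{k}{M}\sum_{m=1}^M R(U,\vt_{U,m})$ used in Step~2 of Algorithm~\ref{finalalgo}.

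\noindent\textbf{Key preliminary estimates.}
First I would establish the Lipschitz continuity $|G(U_1)-G(U_2)|\le L_0|U_1-U_2|$ by inserting $\pm R(U_1,v_{U_2}(s))$ into the integrand and applying Assumption~\ref{ass:reaction} together with Lemma~\ref{lemma:periodic}(ii) and the bound on $\lambda'$ from Assumption~\ref{ass:diff}. This gives a Lipschitz constant $\epsilon L_0$ for the right-hand side $\epsilon G$, which is the crucial $\epsilon$-factor ensuring that Gronwall constants stay moderate on $T={\cal O}(\epsilon^{-1})$. Second, I would quantify the fast-scale approximation error: by Assumption~\ref{ass:approxcn} and Lipschitz continuity of $R$ in its second argument, $|R(U,v_U(t_m))-R(U,\vt_{U,m})|\le C(k^2+\tolP)$; combined with the second-order accuracy of the box/trapezoidal rule on the periodic smooth integrand (cf.\ the remark after Algorithm~\ref{finalalgo}), this yields the per-step consistency bound $|\tilde G(U)-G(U)|\le C(k^2+\tolP)$, uniformly in $U$.

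\noindent\textbf{Truncation and Gronwall.}
A Taylor expansion of $U(T_{n+1})$ around $T_n$ together with the Adams-Bashforth coefficients gives a local truncation error of the form $\tau_n=\tfrac{5}{12}K^2 U'''(\xi_n)+\mathcal{R}_n$, where $\mathcal{R}_n$ absorbs the substitution of $G$ by $\tilde G$ at the points $T_n,T_{n-1}$. Invoking Lemma~\ref{lemma:regularity} ($\|U'''\|_\infty=\mathcal{O}(\epsilon^3)$) and the estimate from the previous paragraph I obtain $|\tau_n|\le C(K^2\epsilon^3+\epsilon(k^2+\tolP))$. The Euler starter applied at the first step yields, by the same reasoning with $\|U''\|_\infty=\mathcal{O}(\epsilon^2)$, an initial error $|E_1|\le C(K^2\epsilon^2+K\epsilon(k^2+\tolP))$. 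I would then apply the standard discrete Gronwall lemma for zero-stable two-step schemes to $E_{n+1}=E_n+K[\tfrac{3}{2}\epsilon(\tilde G(U_n)-G(U(T_n)))-\tfrac{1}{2}\epsilon(\tilde G(U_{n-1})-G(U(T_{n-1})))]+K\tau_n$, giving
\[
|E_n|\le C\bigl(|E_0|+|E_1|\bigr)e^{\epsilon L_0 T_n}+\frac{e^{\epsilon L_0 T_n}-1}{\epsilon L_0}\max_{j\le n}\frac{|\tau_j|}{K}.
\]
Because $T_n=\mathcal{O}(\epsilon^{-1})$ the exponential is uniformly bounded and $(e^{\epsilon L_0 T_n}-1)/(\epsilon L_0)=\mathcal{O}(\epsilon^{-1})$; multiplying out the bounds for $|E_1|$ and $\max|\tau_j|$ cancels exactly one power of $\epsilon$ in each of the two contributions, leaving $|E_n|\le C(\epsilon^2 K^2+k^2+\tolP)$, which is the claim.

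\noindent\textbf{Main obstacle.}
The crucial and only nontrivial point is to thread the $\epsilon$-scaling consistently through the Gronwall argument. Both the factor $\epsilon^2$ in front of $K^2$ and the \emph{absence} of an $\epsilon^{-1}$ blow-up of the $(k^2+\tolP)$-term rely on the joint observation that the Lipschitz constant of the right-hand side is $\mathcal{O}(\epsilon)$ while $T=\mathcal{O}(\epsilon^{-1})$, so that $\epsilon L_0 T=\mathcal{O}(1)$ and the Gronwall amplification exactly compensates the higher powers of $\epsilon$ in the local truncation. Establishing this cancellation carefully, rather than bounding naively by $e^{L_0 T}$, is the key technical step.
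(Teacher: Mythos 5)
Your proposal follows essentially the same route as the paper's proof: Taylor expansion of the exact solution against the Adams--Bashforth step, Lipschitz continuity of the averaged reaction term via Assumption~\ref{ass:reaction}, Lemma~\ref{lemma:periodic} and Assumption~\ref{ass:diff}, the fast-scale consistency bound $k^2+\tolP$ from Assumption~\ref{ass:approxcn}, the $\epsilon^2 K^2$ bound on $|E_1|$ from the Euler starter via Lemma~\ref{lemma:regularity}, and a discrete Gronwall argument in which $\epsilon L_0 T={\cal O}(1)$ supplies exactly the $\epsilon$-cancellation the paper exploits. The only blemish is a notational inconsistency in your Gronwall formula: with $\tau_n$ defined as the per-unit-step truncation error (so that the recursion carries $K\tau_n$), the inhomogeneous term should read $\max_j|\tau_j|$ rather than $\max_j|\tau_j|/K$ --- taken literally the latter would produce a $(k^2+\tolP)/K$ term --- but your stated scaling and final bound make clear this is a slip rather than a gap.
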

\begin{proof}
 Combination of Taylor expansions around $T_n$ and $T_{n-1}$ 
 of the continuous solution $U$ gives
 \begin{multline*}
   U (T_{n+1}) 
   = U (T_n)
   + \frac{3K}{2} \int_0^1\epsilon R\big(U(T_n),v_{U(T_n)}(s)\big) \,\text{d}s\\
   - \frac{K}{2}  \int_0^1
   \epsilon R\big(U(T_{n-1}),v_{U(T_{n-1})}(s)\big)\,\text{d}s 
   + {\cal O}(K^3) \max_{\xi\in (T_{n-1}, T_{n+1})} |U'''(\xi)|,
 \end{multline*}
 where $\xi\in [T_{n-1},T_n]$. 
 In combination with (\ref{MS:AB}), we obtain the error representation
 \begin{multline*}
   E_{n+1} = E_n
   + \frac{3K}{2}  \int_0^1\epsilon
   \Big(R\big(U(T_n),v_{U(T_n)}(s)\big)-
   R\big(U_n,v_{U_n;k}(s)\big)\Big) \,\text{d}s\\ 
   - \frac{K}{2} \int_0^1 \epsilon\Big( R\big(U(T_{n-1}),v_{U(T_{n-1})}(s)\big)
   - R\big(U_{n-1},v_{U_{n-1},k}(s)\big)\Big) \,\text{d}s 
   + {\cal O}(K^3) \max_{\xi\in (T_{n-1}, T_{n+1})} |U'''(\xi)|
 \end{multline*}
 With the Lipschitz continuity of $R(\cdot)$,
 Assumption~\ref{ass:reaction} we 
 estimate 
 \begin{equation*}
   \int_0^1 \Big| R\big(U(T_n),v_{U(T_n)}(s)\big)-
   R\big(U_n,v_{U_n;k}(s)\big) \Big|\, \text{d}s
   \leq C\left(\int_0^1 \big| v_{U(T_n)}(s) - 
   v_{U_n;k}(s)\big|\,\text{d}s + \big| U(T_n) - U_n\big| \right). 
 \end{equation*}
 For the first part, we use the estimate (\ref{periodic:2}) of
 Lemma~\ref{lemma:periodic} and Assumption~\ref{ass:approxcn}
 \begin{align*}
   \int_0^1 \big| v_{U(T_n)}(s) - v_{U_n;k}(s)\big| \,\text{d}s &\leq
   \int_0^1 \big|v_{U(T_n)} - v_{U_n}\big|\,\text{d}s  
   + \int_0^1 \big| v_{U_n} - v_{U_n;k}\big|\,\text{d}s \\
   &\leq
   C_{L\ref{lemma:periodic}b}C_{A\ref{ass:approxcn}}
   \left(|U(T_n)-U_n| + k^2\|v_{U_n}\|_{C^3([0,1])}+\tolP \right)
 \end{align*} 
 In combination with Lemma~\ref{lemma:regularity} this yields
 \begin{align*}
   |E_{n+1}| \leq |E_n| +
   C
   \epsilon K  \left( | E_n| + |E_{n-1}| 
   + k^2 + \tolP  + \epsilon^2 K^2\right). 
 \end{align*}
 with
 $C=C(C_{L\ref{lemma:periodic}b},C_{A\ref{ass:approxcn}},C_{\ref{lemma:regularity}})$. 
 Summing over $n=1,\dots,N-1$ and using $E_0=0$, we obtain 
 \[
 |E_N| \leq |E_1| + C
 \epsilon 
 T_N \big(k^2  + \tolP + \epsilon^2 K^2\big)
 +C
 \sum_{n=1}^{N-1} \epsilon K |E_n|. 
 \]
 The term $|E_1|$ depends on the forward Euler method, that is used to
 compute $U_{K,1}$
 \begin{align*}
   |E_1| \leq CK^2 |U''|_{\infty} \leq C\epsilon^2 K^2,
\end{align*}
 where we have used Lemma~\ref{lemma:regularity} and $|E_0|=0$.
 Finally, a discrete Gronwall inequality yields
 \[
 |E_N| \leq C \epsilon T_N 
 \Big( k^2 + \epsilon^2 K^2 + \tolP
 \Big) 
 \exp\Big(\epsilon T_N\Big). 
 \]
 The postulated result follows for $T_N=\O{\epsilon^{-1}}$.\\
\end{proof}

Finally, we can estimate the error between the multiscale algorithm
and the solution $u(t)$ to the original coupled problem.
\begin{theorem}[A priori estimate for the multiscale algorithm]
  \label{thm:main}
  Let $I=[0,T]$ with $T=\O{\epsilon^{-1}}$ and let $u\in C(I)$ and $U_K$ be 
  the solutions to the original problem~(\ref{problem:coupled}) and
  the discrete effective equations (\ref{MS:AB}), respectively. It
  holds
  \[
  |u(T_n)-U_n| = C \Big(k^2 + \epsilon^2 K^2 + \tolP +
  \epsilon\Big),
  \]
  where $C>0$ does
  not depend on $\epsilon,K,k$ and $\tolP$. 
\end{theorem}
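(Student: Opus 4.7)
The plan is to prove the theorem by a straightforward triangle inequality that splits the total error into a modelling contribution and a discretisation contribution, each of which has already been bounded in the preceding analysis. Concretely, I would write
\[
|u(T_n) - U_n| \le |u(T_n) - U(T_n)| + |U(T_n) - U_n|,
\]
where $U(t)$ denotes the solution to the continuous effective equation~(\ref{motms:slow})--(\ref{motms:periodic}) with initial condition $U(0)=u_0$, and $U_n$ is the fully discrete approximation produced by Algorithm~\ref{finalalgo}.

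For the first term, I would invoke Lemma~\ref{lemma:conterror} directly: under the standing Assumptions~\ref{ass:reaction}, \ref{ass:periodic} and \ref{ass:diff}, and since $T=\mathcal{O}(\epsilon^{-1})$, the lemma yields $|u(T_n) - U(T_n)| \le C\epsilon$. This accounts for the $\mathcal{O}(\epsilon)$ term in the final estimate and reflects the modelling error introduced by replacing the true fast variable $v$ by the family of locally periodic solutions $v_{U(t)}$ and by dropping the $\mathcal{O}(\epsilon^2)$ remainder in~(\ref{barkappa:2.5}).

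For the second term, I would apply Lemma~\ref{lemma:Locdiscerror}, which provides exactly the bound
\[
|U(T_n) - U_n| \le C \bigl(\epsilon^2 K^2 + k^2 + \tolP\bigr)
\]
for $T_n = \mathcal{O}(\epsilon^{-1})$. This captures the time-stepping error of the Adams--Bashforth scheme on the slow scale (noting the gain of an $\epsilon^2$ factor coming from the regularity estimate of Lemma~\ref{lemma:regularity}), the Crank--Nicolson error on the fast scale, and the periodicity tolerance.

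Combining the two estimates gives the asserted inequality with a constant $C>0$ that is independent of $\epsilon$, $K$, $k$ and $\tolP$. The proof is essentially a bookkeeping step; no new obstacle arises, since both Lemmas~\ref{lemma:conterror} and~\ref{lemma:Locdiscerror} are already valid on intervals of length $\mathcal{O}(\epsilon^{-1})$ and their constants have been tracked carefully. The only minor point to verify is that the initial condition used in Lemma~\ref{lemma:conterror} ($U(0)=u_0$, $v(0)=v_{U(0)}(0)$) is consistent with the initialisation $U_0=u_0$ in Algorithm~\ref{finalalgo}, so that the $\mathcal{O}(\epsilon)$ initial offset documented in~(\ref{initialerror}) is already absorbed into the modelling error.
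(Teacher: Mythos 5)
Your proposal matches the paper's proof exactly: the paper also inserts $\pm U(T_n)$, applies the triangle inequality, and bounds the two resulting terms by Lemma~\ref{lemma:conterror} and Lemma~\ref{lemma:Locdiscerror}, respectively. Your additional remark about the consistency of the initial conditions is a reasonable (and correct) bookkeeping check that the paper leaves implicit.
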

\begin{proof}
  We introduce $\pm U(T_n)$ and estimate
  \[
  |u(T_n)-U_n| \le 
  |u(T_n)-U(T_n)| +
  |U(T_n)-U_n|.
  \]
  The two terms on the right hand side are estimated with
  Lemma~\ref{lemma:conterror} 
  and Lemma~\ref{lemma:Locdiscerror}.
\end{proof}

\subsection{Approximation of the periodic flow problem}
\label{sec:periodic}

The temporal multiscale schemes are based on periodic solutions
$\vt_{U_n}(s)$ for $s\in [0,1]$, where the variable
$U_n=U(t_n)$ is fixed such that no feedback between fluid problem and
reaction equation takes place within this short interval. A numerical difficulty lies in 
the determination of the correct initial value $
\vt_{U_n,0}$ that yields periodicity
$\vt_{U_n}(0) = \vt_{U_n}(1)$. Let us consider again the full
Navier-Stokes problem 
\begin{equation}\label{cycle}
  \nabla\cdot \vt_{U_n}=0,\quad 
  \rho(\partial_t \vt_{U_n} + (\vt_{U_n}\cdot\nabla)\vt_{U_n})
  -\div\,\sigmat(\vt_{U_n},p_{U_n}) = \ft,\quad
  \vt_{U_n}(1) = \vt_{U_n}(0) \text{ in }\Omega(U_n).
\end{equation}
We assume that such a periodic solution of
the  
Navier-Stokes equations exists. Some results are given by Kyed and
Galdi~\cite{GaldiKyed2016a,GaldiKyed2016,Kyed2012} that
require, however, severe restrictions on the problem 
data.
Depending on the transient dynamics, the decay of the nonstationary
solution to this periodic solution can be very slow. It 
depends basically on $\exp(-\nu\lambda_0)$, where $\nu$ is the
viscosity and $\lambda_0>0$ the smallest eigenvalue of the Stokes
operator, which depends on the inverse of the domain size.
Several acceleration techniques that are based 
on shooting
methods~\cite{GovindjeePotterWilkening2014,ZahrPerssonWilkening2016},
Newton 
schemes~\cite{Steuerwalt1979,JianBieglerFox2003} or 
gradient-based 
optimisation
techniques~\cite{HanteMommerPotschka2015,RichterWollner2017} have been
proposed 
to quickly identify 
the initial value $\vt_{U_n,0}$.
{We apply a simple acceleration scheme that is based
  on decomposing the periodic solution into its average and the
  fluctuations, see~\cite{Richter2020}. Here, we shortly recapitulate
  the algorithm that has been 
  introduced in~\cite{Richter2020}.
  \begin{algo}[Averaging scheme for the identification of periodic
      solutions]
    Given the initial value $\vt_{U_n,0}^{(1)}$, usually
    $\vt_{U_n,0}^{(1)}\coloneqq \vt_{U_{n-1},0}$ and let $\tolP>0$ be
    a given tolerance. Iterate for
    $l=1,2,\dots$
    \begin{enumerate}
    \item Solve one cycle of~(\ref{cycle}) for
      $(\vt_{U_n}^{(l)},p_{U_n}^{(l)})$ with the initial
      $\vt_{U_n}^{(l)}(0)=\vt_{U_n,0}^{(l)}$ 
    \item Compute the error in periodicity 
      \[
      \errP^{(l)}\coloneqq \|\vt_{U_n}^{(l)}(1)-\vt_{U_n}^{(l)}(0)\|
      \]
    \item Stop, if $\errP^{(l)}<\tolP$. 
    \item Compute the velocity average over the cycle
      \[
      \bar \vt_{U_n}^{(l)}\coloneqq
      \int_0^1\vt_{U_n}^{(l)}(s)\,\text{d}s 
      \]
    \item Compute the stationary update problem for
      $(\bar\wt_{U_n}^{(l)},\bar q_{U_n}^{(l)})$
      \begin{equation}\label{update}
        \nabla\cdot \bar\wt^{(l)}_{U_n}=0,\quad 
        \rho\big(
        (\bar\vt_{U_n}^{(l)}\cdot\nabla)\bar\wt_{U_n}^{(l)} +
        (\bar\wt_{U_n}^{(l)}\cdot\nabla)\bar\vt_{U_n}^{(l)}
        \big)
        -\div\,\sigmat(\bar\wt_{U_n}^{(l)},\bar q^{(l)}_{U_n}) =
        \vt_{U_n}^{(l)}(1)-\vt_{U_n}^{(l)}(0)
      \end{equation}
    \item Update the initial
      \[
      \vt^{(l+1)}_{U_n,0}\coloneqq \vt^{(l)}_{U_n}(1) + \bar\wt^{(l)}_{U_n}
      \]
      \sfrei{and go to step 1}.
    \end{enumerate}
  \end{algo}
  The basic idea of introducing the averaged update
  problem~(\ref{update}) in Step 5 of the algorithm is to quickly
  predict the correct average of the periodic solution. The
  computational effort for each iteration lies mostly in \sfrei{Step 1}, where
  one complete nonstationary cycle of the periodic problem over the
  period $[0,1]$ is computed. Given the step size $k$ this means
  solving $k^{-1}$ time steps of the discrete Navier-Stokes
  problem. In addition, Step 5 calls for the solution of one
  additional stationary problem.}

{Using this scheme we are able
  to reduce the periodicity error to $\tolP<10^{-4}$ in less than 5
  cycles of the algorithm. In the context of usual HMM approaches this
  would correspond to choosing the relaxation time as
  $\eta=\unit[5]{s}$ in terms of computational effort, i.e. 5 times the period length, see~\cite{Abdulleetal2012}. 
  }

%%%%%%%%%%%%%%%%%%%%%%%%%%%%%%%%%%%%%%%%%%%%%%%%%%
%%%%%%%%%%%%%%%%%%%%%%%%%%%%%%%%%%%%%%%%%%%%%%%%%%
%%%%%%%%%%%%%%%%%%%%%%%%%%%%%%%%%%%%%%%%%%%%%%%%%%

%%%%%%%%%%%%%%%%%%%%%%%%%%%%%%%%%%%%%%%%%%%%%%%%%%
%%%%%%%%%%%%%%%%%%%%%%%%%%%%%%%%%%%%%%%%%%%%%%%%%%
%%%%%%%%%%%%%%%%%%%%%%%%%%%%%%%%%%%%%%%%%%%%%%%%%%

\section{Numerical examples}
\label{sec:num}

We consider the full problem described in the introduction, namely the
incompressible Navier-Stokes equations coupled to a scalar ODE
model. In order to transfer the proofs from the simplified setting to this
more relevant case, several open questions regarding the existence and
regularity theory of the Navier-Stokes equations in the periodic
setting would have to be addressed. The numerical results presented in
this section will, however, reveal convergence rates and error
constants that are in full agreement with the theoretical findings for
the simplified model problem.

\subsection{Configuration of the plaque formation problem}
\label{sec:config:plaque}

A sketch of the plaque growth problem is given in
Figure~\ref{fig:config}, the governing equations have been outlined in
the introduction, Section~\ref{sec:intro}.
On the fast scale we consider a  Navier-Stokes flow in a channel whose
width depends on the \emph{slowly evolving variable} $u(t)$. The
variable domain describing the channel is given by 
\begin{equation}\label{domain:2}
  \Omega(u) = \left\{ (x,y)\in\mathds{R}^2\,:\,
  -\unit[5]{cm}<x<\unit[10]{cm},\; 
  |y|<\unit[(1.5-u \gamma(x))]{cm}\right\},\quad
  \gamma(x)=\exp\left(-x^2\right).
\end{equation}
Instead of a complex growth model for the plaque formation as
introduced in~\cite{YangJaegerNeussRaduRichter2015} we use this
explicit dependence of the domain on the scalar $u(t)$. The periodic
Navier-Stokes problem is driven by a time periodic Dirichlet condition
on the inflow boundary $\Gamma_{in}$ 
\begin{equation}\label{inflowprofile}
  \vt_{in}(y,t) = 25\sin(\pi t)^2
  \left(1-\frac{y^2}{1.5^2}\right)
  \unit{cm/s} \quad \text{ on }  \Gamma_{in}\times [0,T].
\end{equation}
%
%where $r=y$ in 2 dimensions and $r=\sqrt{y^2+z^2}$ in 3
%dimensions. 
%
On the outflow boundary $\Gamma_{out}$ we
specify the \emph{do-nothing} outflow condition
\begin{equation}\label{outflow}
  \rho\nu \partial_{\nt}\vt - p\nt = 0
\end{equation}
that includes a pressure normalising condition
$\int_{\Gamma_{out}}p\,\text{d}s=0$, 
see~\cite{HeywoodRannacherTurek1992}. Kinematic viscosity and density
resemble blood and the parameters in the reaction
term~\eqref{reaction} are tuned to obtain a realistic behaviour concerning the different temporal
scales of atherosclerotic plaque growth
\begin{equation}\label{reaction2}
  \rho = \unit[1]{g/cm^3},\quad \nu=\unit[0.04]{cm^2\cdot
    s^{-1}}, \quad \sigma_0 = 30.
\end{equation}
The constant $\sigma_0$ is such that 
the concentration $u$ reaches the value $1$ at
approximately $T={\mathcal O}(\epsilon^{-1})$.

We exploit the symmetry of the problem and compute on the upper half
of the domain only. On
the symmetry boundary at $y=0$ we prescribe the condition
\[
\vt\cdot\nt = 0,\quad \sigmat(\vt,p)\nt\cdot\taut =0.
\]

Problem \eqref{problem:coupled} can be formulated on a reference
domain $\Omega\coloneqq\Omega(0)$ by means of an \emph{Arbitrary Lagrangian
  Eulerian} approach, see~\cite{Donea1982} or \cite[Chapter
  5]{Richter2017}, using the map
\begin{equation}\label{ALE:map}
  T:\Omega(0)\to \Omega(u),\quad
  T(u(t);x,y) = \begin{pmatrix}
    x \\
    \frac{1.5-u(t)\gamma(x)}{1.5} y
  \end{pmatrix},\quad \gamma(x)=\exp(-x^2),
\end{equation}
with derivative and determinant given by 
\begin{equation}
  \begin{aligned}  
    \quad \Ft&:=\hat\nabla T =
    \begin{pmatrix}
      1& 0 \\
      -\frac{u\gamma'(x)}{1.5}y &\frac{1.5-u\gamma(x)}{1.5}
    \end{pmatrix},\quad J:=\det(\Ft) = \frac{1.5-u\gamma(x)}{1.5}. 
  \end{aligned}
\end{equation}
The Navier-Stokes equations mapped to the reference domain take the
form
\begin{equation}\label{ALE:NS}
  \begin{aligned}
    \div\left( J\Ft^{-1}\vt\right) = 0,\quad 
    \rho J\left( \partial_t\vt +  (\Ft^{-1}(\vt-\partial_tT)\cdot\nabla)
    \vt\right)  -
    \div\left( J\Ft^{-T}\hat\sigmat(\vt,p)\Ft^{-1}\right) 
    &=0
    \quad \text{ in }\Omega\\
    \vt(0)=\vt_0,\qquad
    \hat\sigmat(\vt,p)\coloneqq -pI + \rho_f\nu_f \nabla\vt\Ft^{-1}. 
  \end{aligned}
\end{equation}
Formulations~(\ref{ALE:NS}) and~(\ref{problem:coupled}) are equivalent
as long as $J>0$, which we can guarantee if we limit the maximum
deformation by $u_{max}\coloneqq 1$. 
{The resulting Reynolds number 
  \[
  Re=\frac{\bar \vt L}{\nu} 
  \]
  with the channel diameter  $L=\unit[3]{cm}$, the kinematic
  viscosity $\nu=\unit[0.04]{cm^2\cdot s^{-1}}$ and the
  flow rate $\bar v=(3-2U)^{-1}\unit{cm\cdot s^{-1}}$ in the remaining
  gap of width $3-2U(t)$ is in the range of $0$ and about $3\,750$ as
  long as $U\le u_{max}=1$. Such high values are only reached at peak
  inflow, compare~(\ref{inflowprofile}). 
}

The correct model and a full comprehension of shear
effects on plaque formation and growth are still under active
discussion. It is however understood that regions of (relatively) low
shear stress that exhibit an oscillatory character are more prone to
plaque growth~\cite{Whale2006,Dhawan2010}. The reaction
term (\ref{reaction}) mimics this behaviour. Its
dependence on the flow problem by means of the wall shear stress is
nonlinear and cannot be considered by a simple averaging as done
in~\cite{Yang2014,YangJaegerNeussRaduRichter2015,
  YangRichterJaegerNeussRadu2015}.

%%%%%%%%%%%%%%%%%%%%%%%%%%%%%%%%%%%%%%%%%%%%%%%%%%

{
  \subsection{Significance of the model problem and application to the
    plaque formation model}
  \label{sec:relevance}
  The analysis of the temporal multiscale scheme was based on
  Assumptions~\ref{ass:reaction},~\ref{ass:periodic},~\ref{ass:diff} and
  and~\ref{ass:approxcn}. Further, we have used the linearity
  of the model problem and the availability of analytical solutions to
  the ODEs appearing in the model problem. Here, we will shortly
  motivate the relevance of the simplified model problem and discuss the
  application of the multiscale scheme to the full plaque formation
  model.}
  
{If we linearise the Navier-Stokes equations~(\ref{ALE:NS}) by omitting
the convective terms $((\vt-\partial_t T)\cdot\nabla)\vt$, we obtain the Stokes
equations on $\Omega(u)$. These have a system of $L^2$-orthonormal
eigenfunctions with eigenvalues $0<\lambda_0(u)\le
\lambda_1(u)\le\cdots$. As long as the domain does not deteriorate,
i.e. for $0\le u\le u_{max}$ it holds $\lambda_0(u)\ge \lambda_0>0$. 
Due to the regularity of the reference map \eqref{ALE:map}, the
mapping of the equations to the reference domain is also
differentiable. Its derivative \eqref{mot:3} is bounded
as long as contact of the boundary walls is prevented, i.e. as long as
$u\le u_{max}$ is bound away from 1.5, compare \eqref{domain:2} and
\eqref{ALE:map} (Assumption~\ref{ass:diff}). By diagonalisation of the Stokes problem with respect
to the system of eigenfunctions we reduce the problem to a system of
ordinary differential equations of type \eqref{motms:v}.
Lemma~\ref{lemma:periodic} can be applied to each component of the
diagonalized system. However, since the eigenvalues of the Stokes
operator are not bounded a formal extension to the full Stokes problem
requires further steps.}

{The essential assumptions for the application of the multiscale method
is the boundedness and the Lipschitz continuity of the reaction term
$R(\cdot,\cdot)$ with respect to slow and fast variables (Assumption~\ref{ass:reaction}) as well as
the existence of time-periodic solutions to the isolated fast scale
problem (Assumption~\ref{ass:periodic}). Given a fixed value of $u$, the fast scale
problem~(\ref{periodic}) is given by the Navier-Stokes equations on
the domain $\Omega(u)$. 
The unique existence of periodic solutions to the Navier-Stokes
equations is only guaranteed for small problem data,
see~\cite{GaldiKyed2016a,GaldiKyed2016,Kyed2012}. These results will
most likely not apply to the higher Reynolds number regime of typical
blood flow configurations, such that Assumption~\ref{ass:periodic}
can not be verified in our setting. However, given a periodic solution,
since $\ft=0$, the Dirichlet data $\vt_{in}$ is smooth and since the
domain allows for a piecewise $C^\infty$ parametrisation with a finite
number of convex corners, we expect the regularity
\begin{equation}\label{ass:nsreg}
  \sup\big(\|\vt(t)\|_{H^2(\Omega)} + \|p(t)\|_{H^1(\Omega)}\big) \le
  C_{\ref{ass:nsreg}}, 
\end{equation}
see~\cite{HeywoodRannacher1982}. Under this assumption we can show
Lipschitz continuity and boundedness of the reaction term.
\begin{lemma}[Lipschitz continuity]
  \label{lemma:lip}
  Let $u\in C(I)$ with $0\le u(t)\le u_{max}$. Assume that for fixed
  $u$, the time-periodic Navier-Stokes problem allows for a unique
  solution $\vt_u$ satisfying~(\ref{ass:nsreg}) with 
  $C_{\ref{ass:nsreg}}=C(u_{max})$. Then, the reaction
  term~(\ref{reaction}) is bounded
  \begin{equation}\label{ll:1}
    |R(u,\vt_u)| \le 1,
  \end{equation}
  and Lipschitz continuous with respect to both arguments
  \begin{subequations}
    \begin{align}
      \label{L1}
      |R(u,\vt)-R(\eta,\vt)| &\le |u-\eta|
      &\forall u,\eta&\in [0,u_{max}],\quad
      \forall \vt\in H^2(\Omega),\\
      \label{L3}
      |R(u,\vt)-R(u,\ut)| &\le L |\ut-\vt|
      &\forall \vt,\ut&\in H^2(\Omega),
      \quad \forall u\in [0,u_{max}],
    \end{align}
  \end{subequations}
  with a constant $L>0$. 
\end{lemma}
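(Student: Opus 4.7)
The plan is to verify the three estimates (boundedness, Lipschitz in $u$, Lipschitz in $\vt$) directly from the algebraic structure of the reaction term in~\eqref{reaction}. The proof is essentially elementary; the only non-trivial ingredient is a trace-type estimate that controls the wall-shear-stress functional by the $H^2$-norm of the velocity.

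For boundedness~\eqref{ll:1}, I would simply use $0\le u\le u_{max}$ and $|\sigma_{WSS}(\vt)|^2\ge 0$, which immediately give $(1+u)^{-1}\le 1$ and $(1+|\sigma_{WSS}(\vt)|^2)^{-1}\le 1$, hence $|R(u,\vt)|\le 1$. For Lipschitz continuity in the slow variable~\eqref{L1}, I would write
\[
R(u,\vt)-R(\eta,\vt)=\frac{\eta-u}{(1+u)(1+\eta)}\cdot\frac{1}{1+|\sigma_{WSS}(\vt)|^2},
\]
and use $(1+u)(1+\eta)\ge 1$ together with $(1+|\sigma_{WSS}(\vt)|^2)^{-1}\le 1$ to conclude.

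For the Lipschitz estimate in the fast variable~\eqref{L3}, I would first factor out $(1+u)^{-1}\le 1$ and reduce to controlling
\[
\left|\frac{1}{1+|\sigma_{WSS}(\vt)|^2}-\frac{1}{1+|\sigma_{WSS}(\ut)|^2}\right|
=\frac{\big||\sigma_{WSS}(\vt)|^2-|\sigma_{WSS}(\ut)|^2\big|}{(1+|\sigma_{WSS}(\vt)|^2)(1+|\sigma_{WSS}(\ut)|^2)}.
\]
Writing $|a|^2-|b|^2=(a-b)\cdot(a+b)$ and exploiting the standard inequality $|x|/(1+|x|^2)\le \tfrac{1}{2}$, this quotient is bounded by $|\sigma_{WSS}(\vt)-\sigma_{WSS}(\ut)|$. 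Since $\sigma_{WSS}$ is linear in $\vt$, it remains to show
\[
|\sigma_{WSS}(\vt-\ut)|\le C\,\|\vt-\ut\|_{H^2(\Omega)}.
\]
This follows from Hölder's inequality on $\Gamma$ together with the trace inequality $\|\nabla\wt\|_{L^2(\Gamma)}\le C\|\wt\|_{H^2(\Omega)}$, which is uniform on the family $\Omega(u)$ for $u\in[0,u_{max}]$ because the reference map~\eqref{ALE:map} is smooth with uniformly non-degenerate Jacobian. Combining these estimates yields~\eqref{L3} with $L$ depending only on $\rho$, $\nu$, $\sigma_0$, $u_{max}$ and $|\Gamma|$.

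The main obstacle is not any of the algebraic estimates but the uniformity of the trace constant in $u$. Since $\Omega(u)$ deforms with $u$, one has to argue that either (i) after pulling back to the reference domain $\Omega(0)$ via~\eqref{ALE:map} the transformed stress remains a linear, $H^2$-bounded functional with constants depending continuously on $u\in[0,u_{max}]$, or (ii) the family $\{\Omega(u)\}$ admits a uniform trace constant because the boundary parametrisation is smooth and $u_{max}<1.5$ keeps the channel non-degenerate. Both viewpoints are standard but need to be invoked explicitly to ensure that $L$ is independent of $u$.
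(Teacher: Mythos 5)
Your proof is correct and follows essentially the same route as the paper: the elementary bounds $(1+u)^{-1}\le 1$ and $(1+|\sigma_{WSS}|^2)^{-1}\le 1$ for \eqref{ll:1} and \eqref{L1}, and for \eqref{L3} the factorisation of the difference, the inequality $2|\sigma_{WSS}(\vt)|\le 1+|\sigma_{WSS}(\vt)|^2$, linearity of $\sigma_{WSS}$, and the trace estimate $|\sigma_{WSS}(\wt)|\le 2\rho\nu\sigma_0^{-1}c_{tr}\|\wt\|_{H^2(\Omega)}$. Your closing remark on the uniformity of the trace constant over the family $\Omega(u)$, $u\in[0,u_{max}]$, is a point the paper leaves implicit, and addressing it via the non-degenerate reference map \eqref{ALE:map} is a reasonable (if optional) refinement rather than a different approach.
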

\begin{proof}
  Given~(\ref{ass:nsreg}), the wall shear stress is well defined
  \begin{equation}\label{est:wss}
    |\sigma_{WSS}(\vt)|\le 2\rho\nu\sigma_0^{-1} c_{tr} \|\vt\|_{H^2(\Omega)},
  \end{equation}
  where $c_{tr}$ is the constant of the trace inequality
  $\|\nabla \vt\|_{\Gamma}\le c_{tr} \|\vt\|_{H^2(\Omega)}$. 
  Then~(\ref{ll:1})
  directly follows by the construction of $R(\cdot,\cdot)$,
  see~(\ref{reaction}).
  Further, it holds
  \[
  \big|R(u,\vt)-R(\eta,\vt)\big| =
  \big(1+|\sigma_{WSS}(\vt)|^2\big)^{-1}
  \big(1+u\big)^{-1}\big(1+\eta\big)^{-1}|u-\eta|,
  \]
  which shows~(\ref{L1}). Likewise
  \[
  \big|R(u,\vt)-R(u,\ut)\big| =
  \big(1+u\big)^{-1}
  \frac{\big|\sigma_{WSS}(\vt)+\sigma_{WSS}(\ut)\big|}
  {\big(1+|\sigma_{WSS}(\vt)|^2\big)
    \big(1+|\sigma_{WSS}(\ut)|^2\big)}
  \big|\sigma_{WSS}(\vt)-\sigma_{WSS}(\ut)\big|
  \]
  Since the wall shear stress is a linear functional
  $\sigma_{WSS}(\vt)+\sigma_{WSS}(\ut)=\sigma_{WSS}(\vt+\ut)$ 
  and due to the relation $2\sigma_{WSS}(\vt) \leq 1+\sigma_{WSS}(\vt)^2$,
  we estimate
  \[
  \big|R(u,\vt)-R(u,\ut)\big|\le
  %2\rho\nu \sigma_0^{-1}c_{tr}
  \|\vt-\ut\|_{H^2(\Omega)},
  \]
  see~(\ref{est:wss}).
\end{proof}
Finally, the validity of Assumption~\ref{ass:approxcn} has been discussed in Remark~\ref{rem.approxcn}.}

\subsection{Discretisation}

We briefly sketch the discretisation in space and time. All numerical
experiments have been realised in the software library Gascoigne
3D~\cite{Gascoigne3D}. 
% In time, we introduce discrete steps $t_n=k\cdot n$ based on a uniform
% step size $k>0$. We denote by $\vt_n = \vt(t_n)$ and $c_n = u(t_n)$
% the approximation at 
% time $t_n$ and discretise with the Crank-Nicolson scheme
% %
% \begin{multline}\label{ALE:timestepping}
%   \div\left( J^n\Ft_n^{-1}\vt^n\right) = 0,\quad
%   \rho \bar J^n ( \vt^n-\vt^{n-1})
%   + \frac{k}{2} A(c^n;\vt^n,p^n)
%   + \frac{k}{2} A(c^{n-1};\vt^{n-1},p^{n})
%   = 0\\
%   c^n-c^{n-1} = k \epsilon \left( \theta \frac{\epsilon r[\vt^n]}{1+
%     c^n} + (1-\theta) \frac{\epsilon r[\vt^{n-1}]}{1+
%      c^{n-1}}\right).
% \end{multline}
% %
% Note that the divergence condition and the pressure are taken fully
% implicit. For better stability an implicitely shifted variant of the
% Crank-Nicolson scheme is possible. 
We use uniform time-steps $k$ and $K$ on both scales and the time-stepping schemes 
presented in \eqref{MS:AB}-\eqref{CN:flow}.

For spatial discretisation we triangulate the reference domain
$\hat\Omega$ into open quadrilaterals, allowing for local refinement
based on hanging nodes, see~\cite{Richter2012a} for details on the
realisation in the software Gascoigne 3d. Equal-order biquadratic
finite elements are used for velocity and pressure degrees of
freedom. Pressure stabilisation is accomplished with
the local projection stabilisation
scheme~\cite{BeckerBraack2001}. Stabilisation of the convective terms
is not required due to the moderate Reynolds numbers.

\paragraph{Direct simulation}

The PDE/ODE system is a multiscale coupled problem. We will compare the presented 
multiscale scheme with a direct forward simulation. As we do not
expect any stiffness-related problems in the ODE we decouple the PDE/ODE system by an
implicit/explicit approach where, as in the multiscale approach, the discretisation of the
Navier-Stokes equation~(\ref{ALE:NS}) is based on the second-order Crank-Nicolson
scheme and the discretisation of the ODE on the second-order explicit
Adams-Bashforth formula
%   \[
%   \begin{aligned}
%     U_{n+1} -  U_n&=  \frac{3k}{2} \epsilon R(U_n,\vt_n) - 
%     \frac{k}{2} \epsilon R(U_{n-1},\vt_{n-1}),\\
%     \rho  J_{n+1} (\vt_{n+1}-\vt_{n}) &=  \frac{k}{2}
%     A(\bar u_{n+1};\vt_{n+1},p_{n+1}) +\frac{k}{2}
%     A(\bar u_n;\vt_{n},p_{n+1}),\quad
%     B(\bar u_{n+1},\vt_{n+1})=0,
%   \end{aligned}
%   \]
resulting in a multiscale method that splits naturally into one
explicit ODE-step and an implicit Navier-Stokes step. 

% \subsubsection{Discretisation of the averaged problem}
% 
% We fix $\bar c$ such that $\Ft=\Ft(\bar c)$ and $J=J(\bar c)$ do not
% depend on the time step 
% and such that $\partial_t T=0$. Summing (\ref{CN:flow})
% from $n=1$ to $n=N$ shows the following relation for a discrete
% solution
% %
% \begin{multline*}
%   k  \div\left( J\Ft^{-1} \sum_{n=1}^N \vt^n \right) =0,\quad
%   k\rho J\Ft^{-1}\left(
%   (1-\theta)\vt^0\cdot\nabla\vt^0 
%   +\sum_{n=1}^{N-1} \vt_n\cdot\nabla\vt_n + \theta \vt_N\cdot\nabla\vt_N
%   \right)\\
%   - k\rho_f\nu_f \left(
%   \div\left(J\Ft^{-T} 
%   \left((1-\theta)\nabla\vt_0 + 
%   \sum_{n=1}^N\nabla\vt_n +
%   \theta\nabla\vt_N\right)\Ft^{-1}\Ft^{-1}\right)
%   \right) + k J\Ft^{-T}\Ft^{-1} \sum_{n=1}^N p_n\\
%   =\rho J(\vt_0-\vt_N) 
% \end{multline*}

\subsection{Numerical analysis of the plaque formation problem}

\subsubsection{Configuration with resolvable time scales}

\begin{figure}[t]
  \begin{center}
    \includegraphics[height=0.25\textwidth]{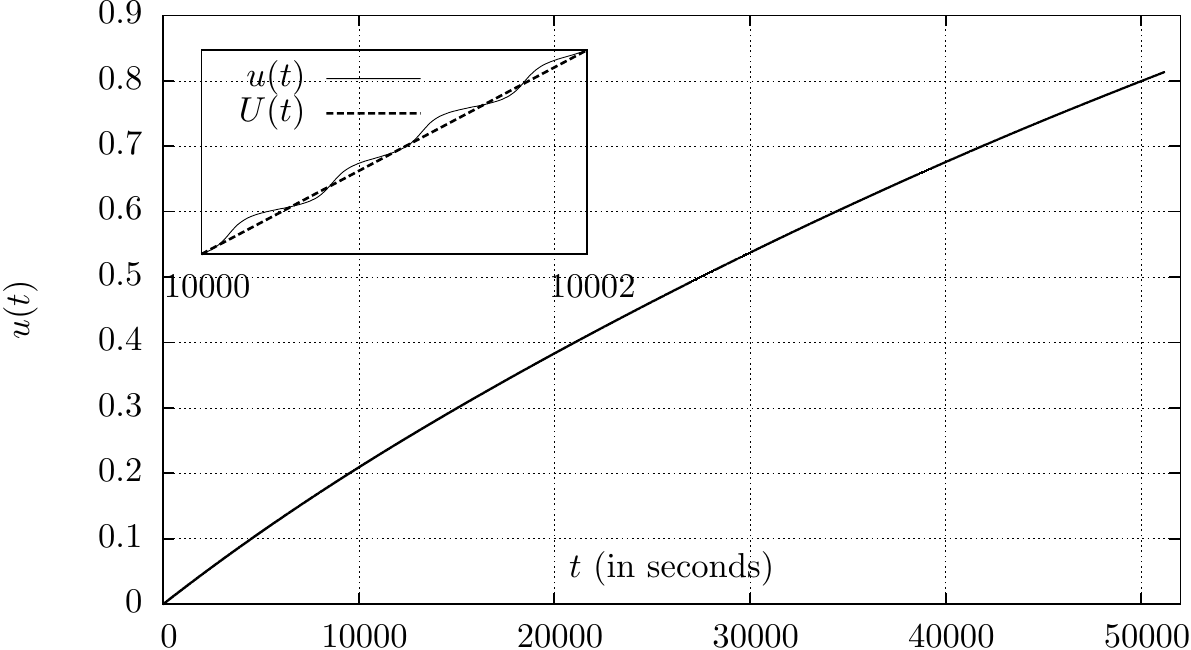}
    \hspace{0.02\textwidth}
    \includegraphics[height=0.25\textwidth]{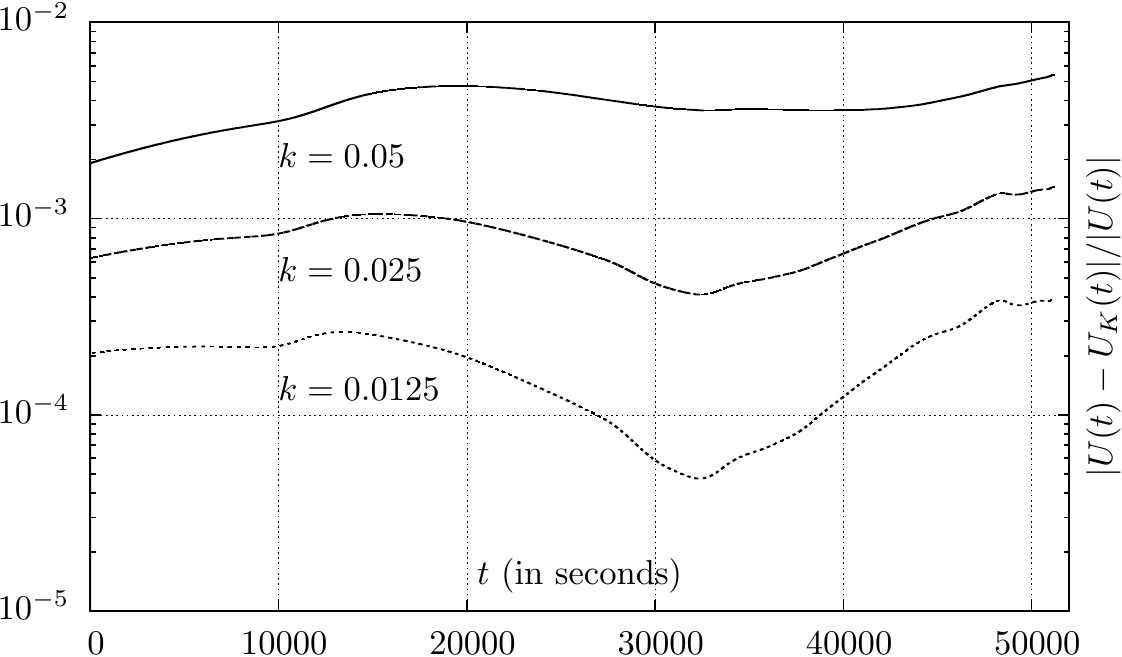}
  \end{center}
  \caption{\emph{Left}: Evolution of the concentration variable $U(t)$
    as function over time (forward simulation with $k=0.05$s). In the
    small subplot we show both $U(t)$ and the resolved
    variable $u(t)$. The maximum deviation is bound by
    $\max|u(t)-U(t)|\le 3\cdot  10^{6}$. 
    \emph{Right}:
    Relative error in $U(t)$ under refinement of the time step
    $k$ (compared to extrapolated values).
  \label{fig:num1:1}}
\end{figure}

\begin{figure}[t]
  \begin{center}
    \includegraphics[width=0.48\textwidth]{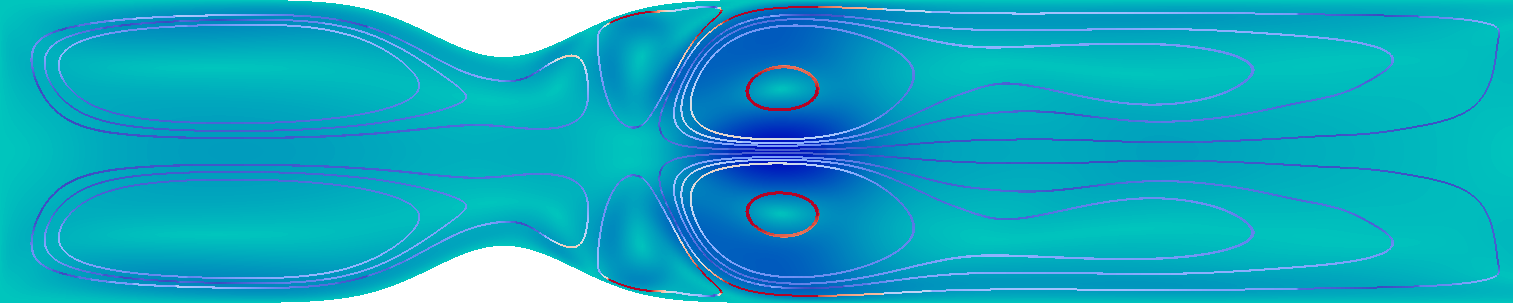}
    \includegraphics[width=0.48\textwidth]{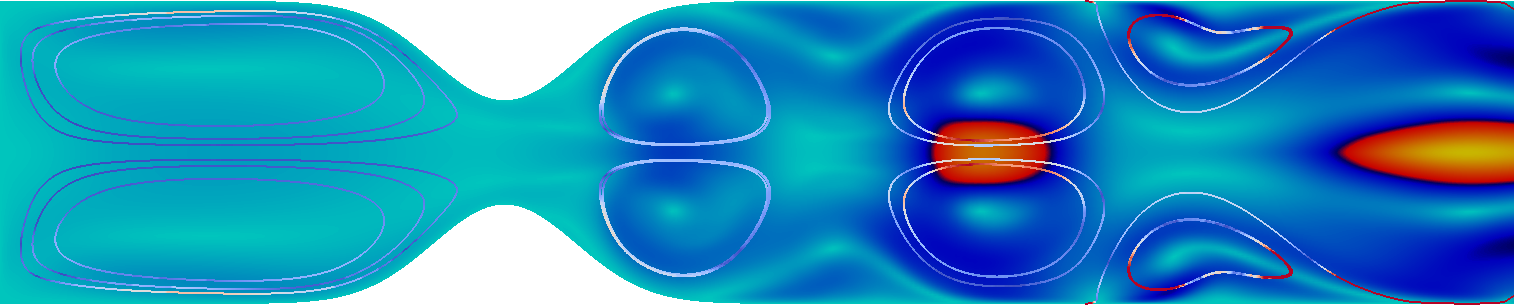}

    \smallskip
    \includegraphics[width=0.48\textwidth]{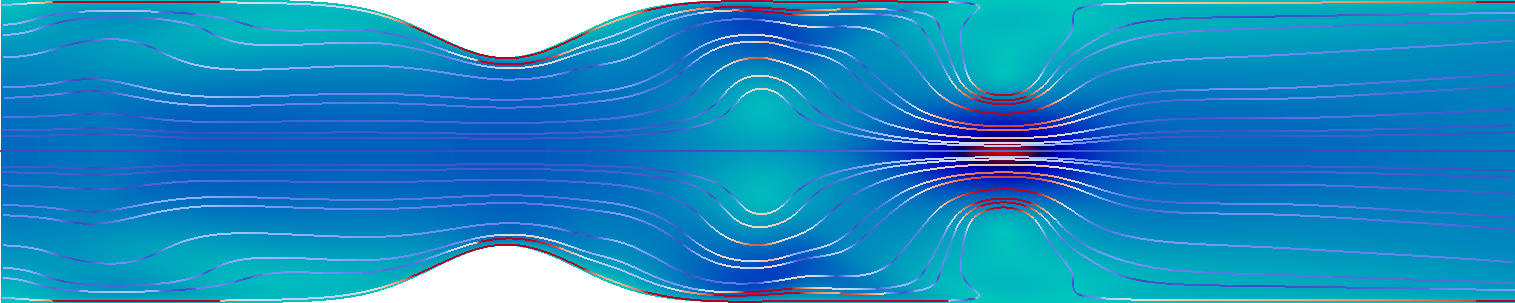}
    \includegraphics[width=0.48\textwidth]{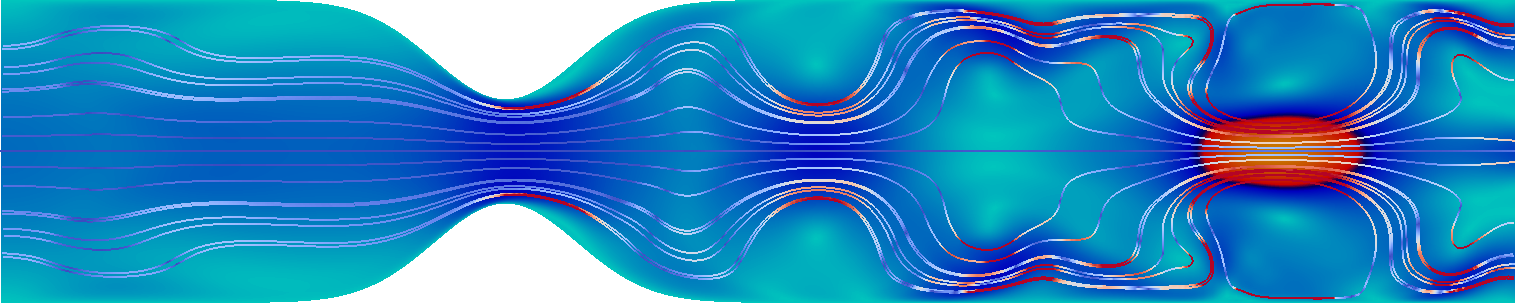}

    \smallskip
    \includegraphics[width=0.48\textwidth]{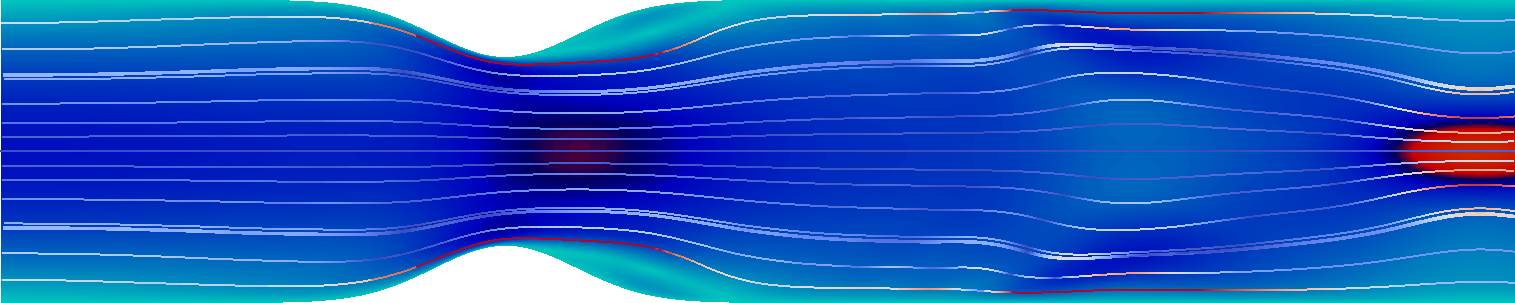}
    \includegraphics[width=0.48\textwidth]{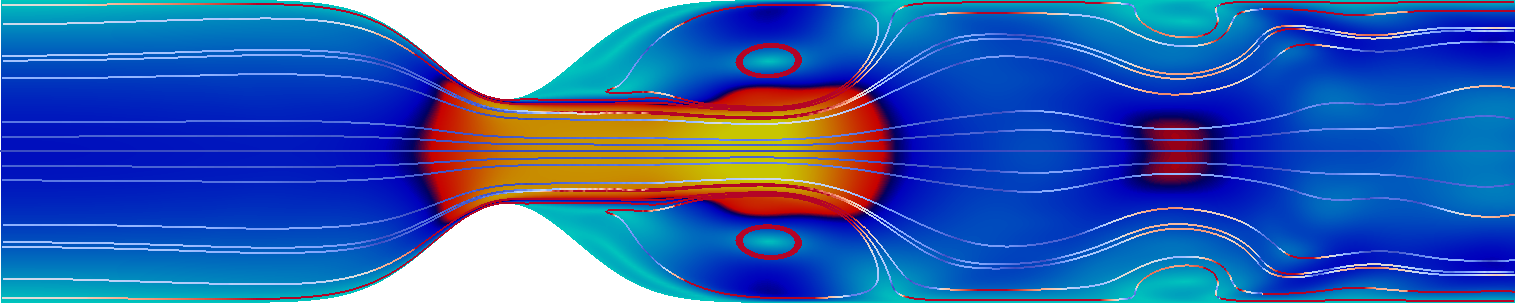}

    \smallskip
    \includegraphics[width=0.48\textwidth]{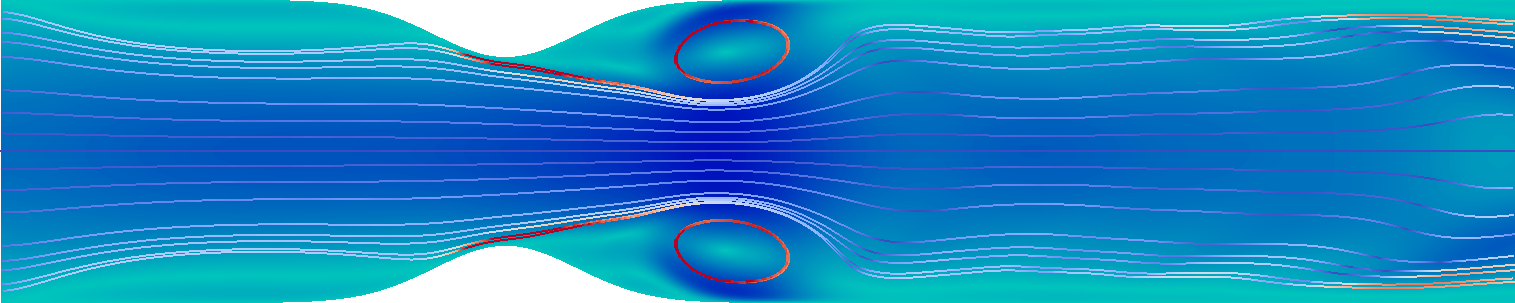}
    \includegraphics[width=0.48\textwidth]{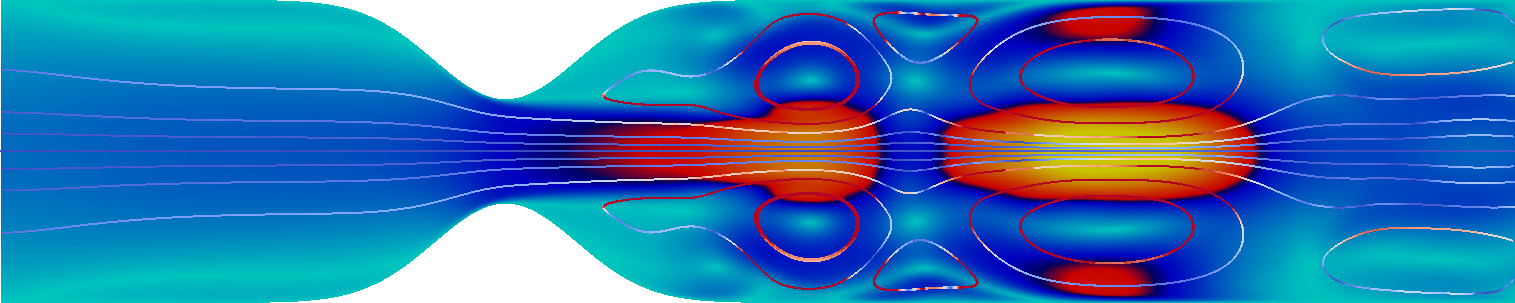}

  \end{center}
  \caption{Velocity magnitude at times $t=n\cdot 6\,250$s for
    $n=1,2,\dots,8$ on domains with different growth. As the inflow
    profile is periodic, the narrowing of the domain causes a significant
    change of the flow pattern. }
  \label{fig:vtks}
\end{figure}

In a first test we take the value
$\epsilon=5\cdot 10^{-5}$ in~(\ref{reaction2}). By this choice, the
concentration $u$ reaches approximately $1$ after about
$\unit[50\,000]{s}\approx \unit[1]{day}$ such that we can still
resolve the coupled problem in all temporal 
scales (although the direct simulation still takes a substantial 
effort). To keep the computational effort within bounds we use a
rather coarse spatial discretisation with $320$ elements, resulting in
$4\,131$ unknowns of a biquadratic equal-order discretisation for
velocities and pressure. 

In Figure~\ref{fig:num1:1} (left) we give an overview of the temporal
evolution of the concentration variable $u_k(t)$ using a full resolution of the 
fast scale. The simulations
break down at $T\approx \unit[55\,000]{s}$ due to the deterioration of
the ALE map and the high Reynolds number. 
For the small interval $[10\,000\unit{s},10\,002\unit{s}]$ we show a
close-up view of the resolved solution $u_k(t)$ and the averaged
value $\int_{t}^{t+1}u_k(s)\,\text{d}s$. The deviation is bound
by $3\cdot 10^{-6}=\O{\epsilon}$, in agreement with
Lemma~\ref{lemma:conterror}.  
We determine reference values $u_{ref}(T_n)$ by extrapolating
numerical results for $k=0.05\unit{s}$, 
$k=0.025\unit{s}$ and $k=0.0125\unit{s}$.  The relative errors 
$|u_k(t_n)-u_{ref}(t_n)|/|u_{ref}(t_n)|$ (based on
these extrapolated 
errors) are given in  Figure~\ref{fig:num1:1} (right). The convergence rate in
terms of $k$ is approximately quadratic. Further, there is no
significant accumulation of simulation errors over time. 

By the evolution of the concentration $u$ over time, the
computational domain undergoes substantial deformations with a strong
narrowing of the flow domain. In Figure~\ref{fig:vtks} we show
snapshots of the solution at different time steps, $t\approx
6250s, 13500s, 18750s,...,50000s$. The narrowing of the gap causes an acceleration of the fluid resulting
in a higher Reynolds number flow with a substantial variation in the
feedback functional $R(u,\vt)$ which depends on the wall shear
stress. 

\paragraph{Multiscale approach}

Next, in Figure~\ref{fig:num1:2} we show the results obtained with the
multiscale method for this relaxed problem with $\epsilon=5\cdot
10^{-5}$. The tolerance for approximating the periodic flow 
problems is set to
\[
\|\vt_U(1)-\vt_U(0)\|_{L^2(\Omega)}^2 +
\|p_U(1)-p_U(0)\|_{L^2(\Omega)}^2  < \tolP^2\coloneqq 10^{-8}. 
\]
For each of the three short time step sizes
$k=0.05$s, $k=0.025$s and $k=0.0125$s we use long time step sizes
ranging from $K=6\,400$s to $K=400$s. In the left plot we compare the
solutions for different values of the long time step size $K$. In this
(non-logarithmic) plot we see convergence of the results to the
corresponding resolved simulation with the same short step size
$k=0.025$s. The lower plot shows the corresponding results for a
variation of the small step size $k$, while the long scale step
size is fixed to $K=400$s. For comparison we show the results obtained
with the resolved simulation for these small-step sizes. Again we see convergence of the
multiscale  scheme towards the resolved scheme.

\begin{table}[t]
  \begin{center}
    \begin{tabular}{l|lll|lll}
      \toprule
      &\multicolumn{3}{c|}{$U_K$}
      &\multicolumn{3}{c}{error (w.r.t. extrapolation)}\\
      $k$ & $0.05$s& $0.025$s& $0.0125$s & $0.05$s& $0.025$s& $0.0125$s\\
      \midrule
      $K=6400$s&0.8089225 &0.8118418 &0.8126760&$1.11\cdot 10^{-2}$&$7.55\cdot 10^{-3}$&$6.53\cdot 10^{-3}$ \\
      $K=3200$s&0.8122441 &0.8153539 &0.8162696&$7.05\cdot 10^{-3}$&$3.25\cdot 10^{-3}$&$2.13\cdot 10^{-3}$ \\
      $K=1600$s&0.8132730 &0.8164325 &0.8173319&$5.80\cdot 10^{-3}$&$1.93\cdot 10^{-3}$&$8.35\cdot 10^{-4}$ \\
      $K=800 $s&0.8135139 &0.8166886 &0.8175426&$5.50\cdot 10^{-3}$&$1.62\cdot 10^{-3}$&$5.77\cdot 10^{-4}$ \\
      $K=400 $s&0.8135782 &0.8167926 &0.8176490&$5.42\cdot 10^{-3}$&$1.49\cdot 10^{-3}$&$4.47\cdot 10^{-4}$ \\
%      $K=200$s&0.8135942 &0.8168135 &0.8176831&$5.40\cdot 10^{-3}$&$1.47\cdot 10^{-3}$&$4.05\cdot 10^{-4}$ \\
%      $K=100$s&0.8135983 &0.8168200 &0.8176907&$5.40\cdot 10^{-3}$&$1.46\cdot 10^{-3}$&$3.96\cdot 10^{-4}$ \\
      \midrule
      resolved &0.8135999 &0.8168226 &0.8176928 & $5.42\cdot 10^{-3}$&$1.46\cdot 10^{-3}$&$3.93\cdot 10^{-4}$\\
      \bottomrule
    \end{tabular}
    \smallskip
    \[
    \begin{aligned}
      \text{Fit to }U(k,K) &= U + C_k k^{q_k} +
      C_K K^{q_K}\qquad   U = 0.818006\pm 10^{-3}\%\\
      C_k=-1.12\pm  17\%,\quad  C_k &=-6.61\cdot 10^{-10}\pm 34\%,\quad
      q_k = 1.85\pm 3.39\%,\quad  q_K = 1.80\pm 2.14\%\\
    \end{aligned}
    \]      
  \end{center}
  \caption{Convergence of the multiscale method at time
    $T=\unit[51\,200]{s}$. We show the values of $U_K$ and the error
    (w.r.t. the extrapolation in $k\to 0$ and $K\to 0$). We compare the
    results of the multiscale method with the fully resolved forward
    computation. Finally, we fit the numerical results to the
    expected convergence behaviour. }
  \label{tab:num1:1}
\end{table}

\begin{figure}[t]
  \begin{center}
    \includegraphics[width=0.48\textwidth]{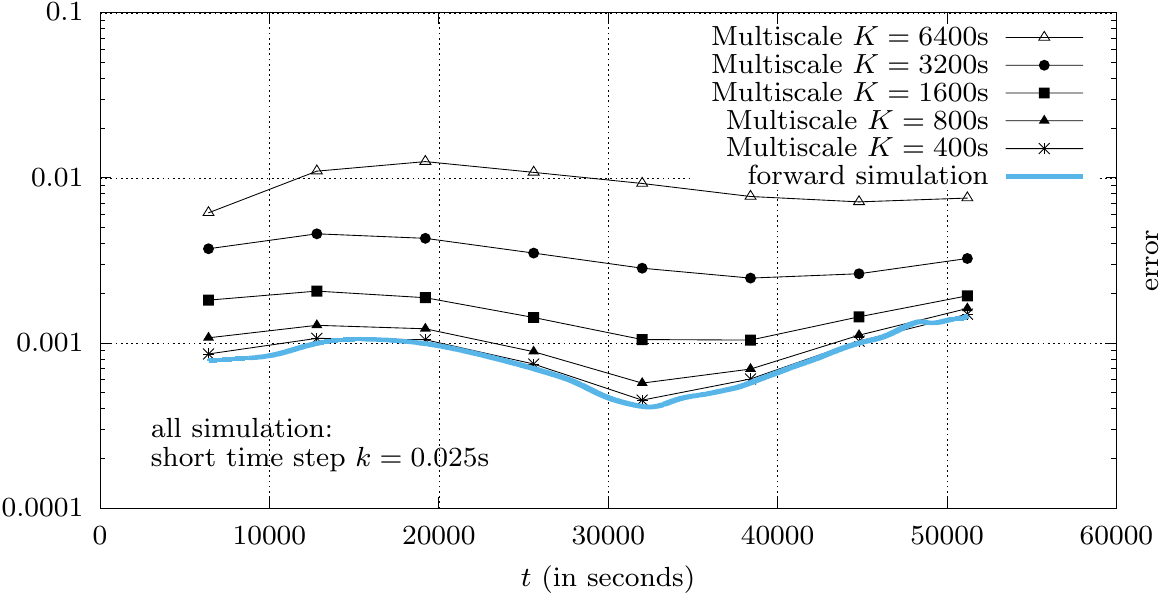}
    \includegraphics[width=0.48\textwidth]{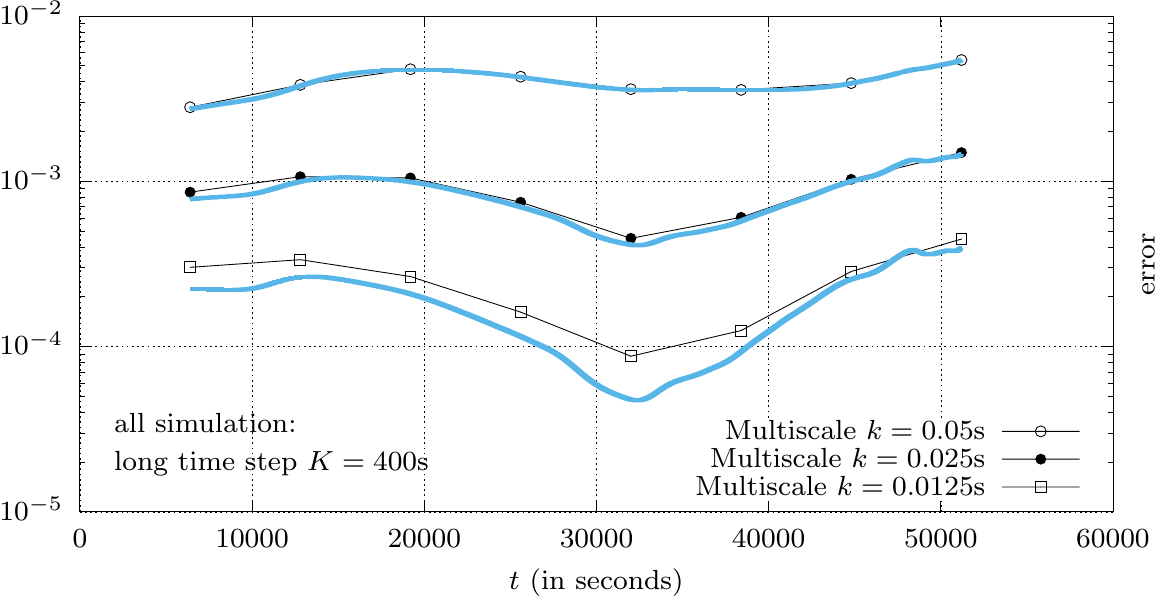}
    \caption{Convergence of the temporal multiscale method for the
      relaxed problem $\epsilon=5\cdot 10^{-5}$.
      \emph{Left}: Effect of the long time step $K$ using the small time step
      $k=0.025$s. \emph{Right}: Effect of the short time step $k$ using the
      long time step size $K=400\unit{s}$.
      For comparison we plot the 
      error of the fully resolved simulation using these time-step sizes. 
      }
    \label{fig:num1:2}
  \end{center}
\end{figure}

The effect of the small step size $k$ is dominant.  This is
highlighted by a closer analysis of the convergence at time
$t=51\,200$s, the results being shown in Table~\ref{tab:num1:1}. We
indicate the concentration $U(t)$ and the errors for the different 
multiscale approaches as well as for the resolved forward
simulation. We fit all these values to the postulated relation
\begin{equation}\label{convergence:kappa}
  U(k,K) = U + C_k k^{q_k} + C_K K^{q_K}
\end{equation}
to get a better understanding of the convergence rates. We estimate
all parameters $u,C_k,C_K,q_k,q_K$ (obtained with gnuplot
fit~\cite{gnuplot})  and find 
\[
U(k,K) = U - 1.12\cdot  k^{1.85} - 6.61\cdot 10^{-10}\cdot
K^{1.80},
\]
see also Table~\ref{tab:num1:1}. 
Convergence is close to the expected second order, both in $k$ and
$K$. The most striking result is the good estimation of the error
constant that shows the proper scaling in  $\epsilon^2$. 
This result
is in good correspondence to the error estimate derived in
Theorem~\ref{thm:main} where the constant in front of the $K^2$-term
depends on $\epsilon^2$. 
Balanced discretisation errors are given for 
$\epsilon^2 K^2 \approx k^2$, i.e. for $K \approx \epsilon^{-1} k$.

Based on the time step relation we can compute the possible speedup
of the multiscale approach which we measure in the overall number of
Navier-Stokes time steps to be performed. 
The forward algorithm requires 
$E_{fwd} = \frac{T}{k}$ solution steps,
while the multiscale approach has an effort of
$E_{ms} = \nicefrac{T}{K}\cdot n_{period} \nicefrac{1}{k} = \nicefrac{T
  n_{period}}{kK}$ steps, where $n_{period}$ is the number of cycles
that are necessary to compute a periodic solution.
%, see Section~\ref{sec:period}. 
Given $K\approx \epsilon^{-1} k$ we approximate
$E_{ms} \approx \nicefrac{\epsilon Tn_{period}}{k^2}$,
and the speedup is estimated by
\[
\frac{E_{fwd}}{E_{ms}} = \frac{k}{\epsilon n_{period} }. 
\]
In our numerical example we identify $n_{period}\le 5$ and with $\epsilon= 
5\cdot 10^{-5}$  we expect a speedup of $4\,000 k$. In 
Figure~\ref{fig:num1:3} we plot the error over the required number of
Navier-Stokes time steps. By circles we indicate the 
multiscale results with a balanced error contribution, which we define
as the state, where the error of the multiscale approach is within $10\%$ of the error
of a fully resolved simulation for the same $k$. We observe speedups of 1:250 for
$k=\unit[0.05]{s}$, 1:180 for $k=\unit[0.025]{s}$ and 1:90 for
$k=\unit[0.0125]{s}$, slightly better values than  the predicted ones
based on $4000 k$. The overall 
computational time for the forward simulation with $k=\unit[0.0125]{s}$ was
about 13 days, while the multiscale simulation with $k=\unit[0.0125]{s}$ and
$K=\unit[800]{s}$, giving a comparable accuracy, was about
$\unit[45]{min}$.

\begin{SCfigure}
  \centering
  \includegraphics[width=0.48\textwidth]{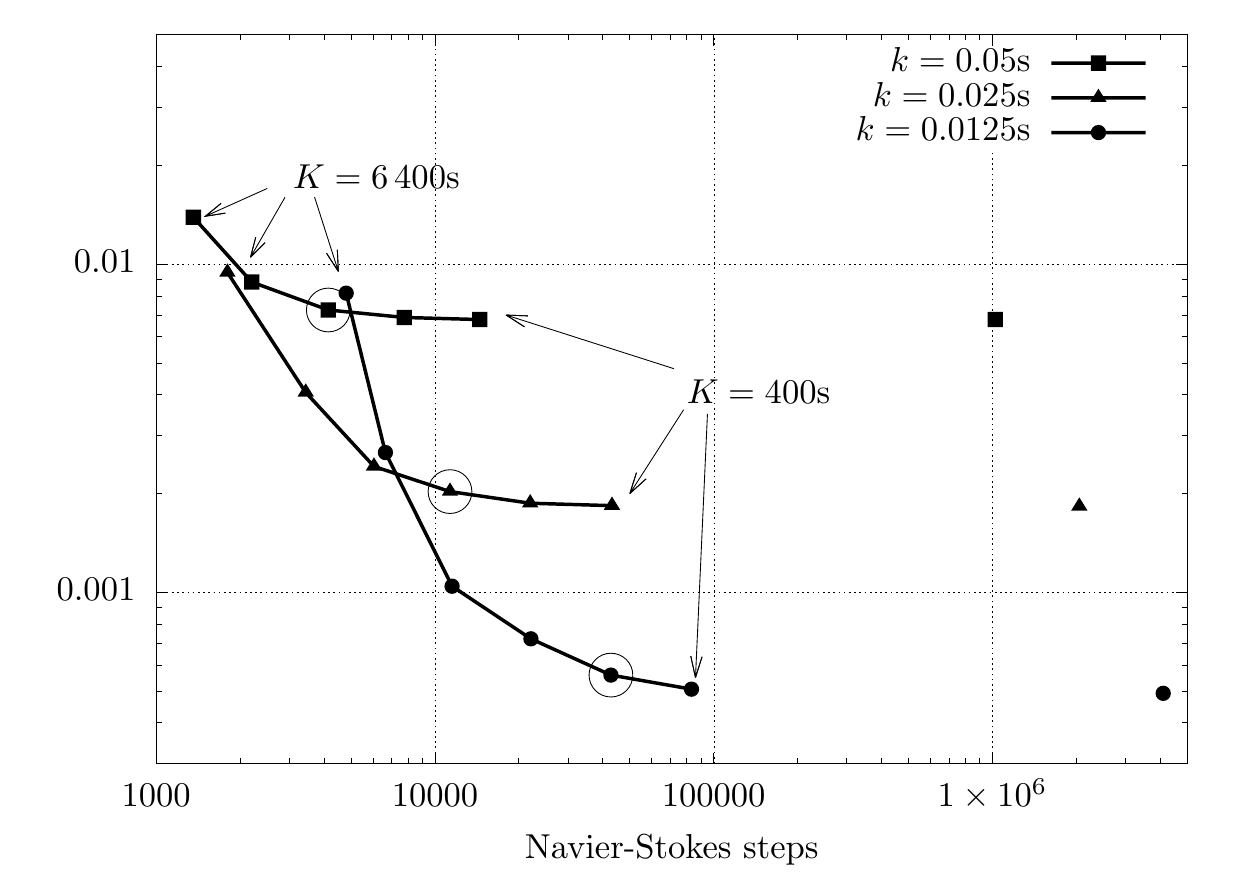}
  \caption{Computational effort (measured in Navier-Stokes times steps)
    for the multiscale approach (lines) and the 
    resolved forward simulation (points). We use three small time
    steps $k$ from $0.05$s down to $0.0125$s and vary the long time
    step $K$ from $6\,400$s to $400$s. Circles indicate multiscale
    solutions of a quality comparable to the resolved forward
    simulation (at most 10\% additional error). The computational time
    for one Navier Stokes step is about $0.2$s (Core i7-7700, 3.60GHz,
    1370 spatial unknowns, biquadratic finite elements).  } 
  \label{fig:num1:3}
\end{SCfigure}

\subsubsection{Configuration with realistic time scales}

Finally, we consider the coupled problem with the time scale parameter
$\epsilon = 10^{-6}$, which is close to the temporal dynamics of 
atherosclerotic plaque growth and
50 times smaller than in the first example. Here,
a resolved forward simulation is not feasible. The concentration
$u(t)$ will reach  a value of  approximately $0.8$ at
$T\approx 2.5\cdot 10^6\unit{s}\approx \unit[30]{days}$. 

\begin{table}[t]
  %  \input{data/LONG/extra.tex}
  % 0.12  a = 0.636257 % 0.06 a = 0.599325 % 0.03 a = 0.592343
\resizebox{\textwidth}{!}{%          
\begin{tabular}{r|rrr|rrr|rrr}
\toprule
\multicolumn{1}{l}{}
&\multicolumn{3}{c|}{$h=0.16$cm}&\multicolumn{3}{c|}{$h=0.08$cm}&\multicolumn{3}{c}{$h=0.04$cm}\\
\multicolumn{1}{l}{}& $k=0.05$s & $k=0.025$s & $k=0.0125$s & $k=0.05$s & $k=0.025$s & $k=0.0125$s & $k=0.05$s & $k=0.025$s & $k=0.0125$s\\
\midrule
$K=204800$s & $ 4.45\cdot 10^{-3} $ & $ 2.46\cdot 10^{-3} $& $
2.20\cdot 10^{-3} $ & $ 4.06\cdot 10^{-3} $& $ 2.42\cdot 10^{-3} $& $
2.04\cdot 10^{-3} $& $ 4.06\cdot 10^{-3} $& $ 2.42\cdot 10^{-3} $&
$ 2.04\cdot 10^{-3} $\\
$K=102400$s & $ 2.86\cdot 10^{-3} $ & $ 8.94\cdot 10^{-4} $& $ 6.17\cdot
10^{-4} $ & $ 2.75\cdot 10^{-3} $& $ 1.06\cdot 10^{-3} $& $
6.28\cdot 10^{-4} $& $ 2.75\cdot 10^{-3} $& $ 1.06\cdot 10^{-3} $&
$ 6.28\cdot 10^{-4} $\\
$K=51200$s & $ 2.43\cdot 10^{-3} $ & $ 4.76\cdot 10^{-4} $& $ 1.96\cdot
10^{-4} $ & $ 2.39\cdot 10^{-3} $& $ 7.04\cdot 10^{-4} $& $
2.58\cdot 10^{-4} $& $ 2.39\cdot 10^{-3} $& $ 7.04\cdot 10^{-4} $&
$ 2.58\cdot 10^{-4} $\\
\midrule
\multicolumn{2}{l}{extrapolated ($k,K\to 0$)}&
\multicolumn{2}{r|}{$|U(T)-U_h(T)| \approx 4.55\cdot
10^{-2}$}&\multicolumn{3}{r|}{$|U(T)-U_h(T)|\approx 8.61\cdot
10^{-3}$}&\multicolumn{3}{r}{$|U(T)-U_h(T)|\approx 1.63\cdot 10^{-3}$}\\
\bottomrule
\end{tabular}}
%  
%  & 0.05 & 0.025 & 0.0125 
%  & $ 4.06e-03 $& $ 2.42e-03 $& $ 2.04e-03 $
%  & $ 2.75e-03 $& $ 1.06e-03 $& $ 6.28e-04 $
%  & $ 2.39e-03 $& $ 7.04e-04 $& $ 2.58e-04 $
%  
%  
%  & 0.05 & 0.025 & 0.0125
%  & $ 3.88e-03 $& $ 2.37e-03 $& $ 1.99e-03 $
%  & $ 2.63e-03 $& $ 1.05e-03 $& $ 6.23e-04 $
%  & $ 2.29e-03 $& $ 7.02e-04 $& $ 2.61e-04 $
%

%
% fit in h:
%
% 0.16 0.636257
% 0.08 0.599325
% 0.04 0.592343
%
% a(h) = 0.590715 + 3.72422 h^2.40316

  \caption{Convergence of the multiscale approach for
    $\epsilon=10^{-6}$. On three mesh levels we indicate the errors in
    the concentration $U(T)$ at $T=\unit[1\,843\,200]{s}\approx \unit[21]{days}$. In each block,
    the error are given w.r.t. the extrapolation $k,K\to 0$. In the
    last line we indicate the (dominating) spatial error for each block. } 
  \label{tab:num2}
\end{table}

Assuming the validity of estimate~(\ref{convergence:kappa}) and in
addition that $C_K\approx \epsilon^2$ we expect balanced error
contributions for $K \approx\epsilon^{-1}k = 10^6 k$. The character of
the short scale problem does not depend on $\epsilon$. Hence we
consider again the step sizes $k=0.05$s, $k=0.025$s and $k=0.0125$s. The
large time step, however, can be significantly increased. We present 
results for $T\approx \unit[21]{days}$ in Table~\ref{tab:num2}. For
this second example, we vary also the mesh size $h$ to discuss  the
impact of all relevant discretisation  parameters. While a smaller
value of $\epsilon$ makes the time scale challenge more severe, the
multiscale approach will profit, as the potential speedup will benefit
from the relation $K\approx \epsilon^{-1} k$.

\begin{figure}[t]
  \begin{center}
    \begin{minipage}{0.4\textwidth}
      \includegraphics[width=\textwidth]{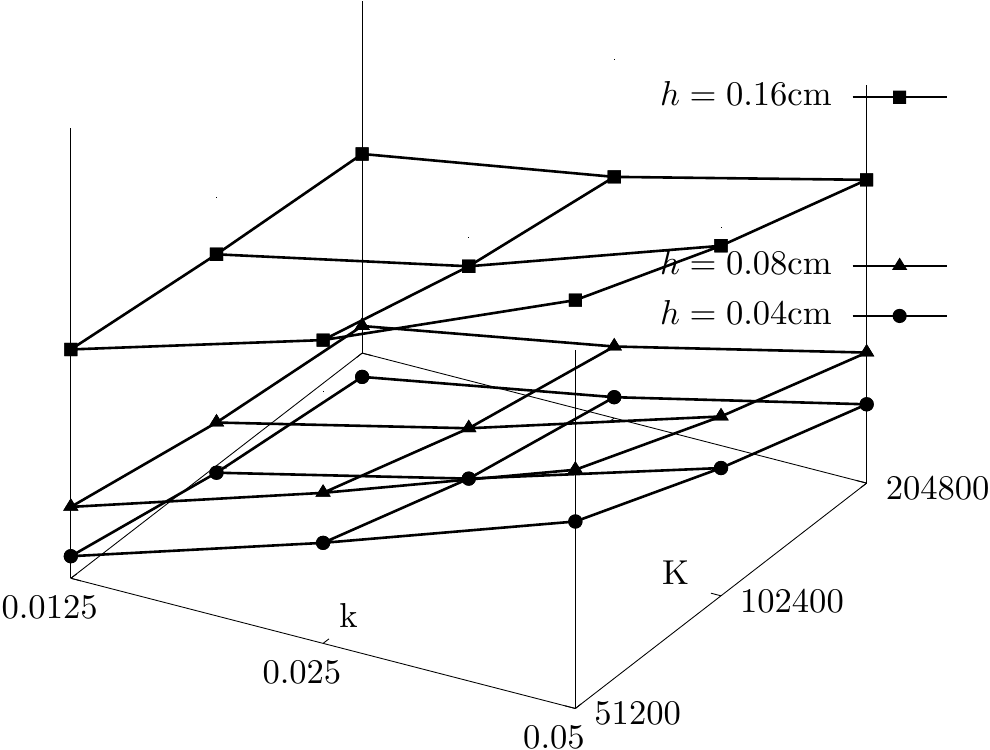}
    \end{minipage}
    \begin{minipage}{0.55\textwidth}
      \includegraphics[width=\textwidth]{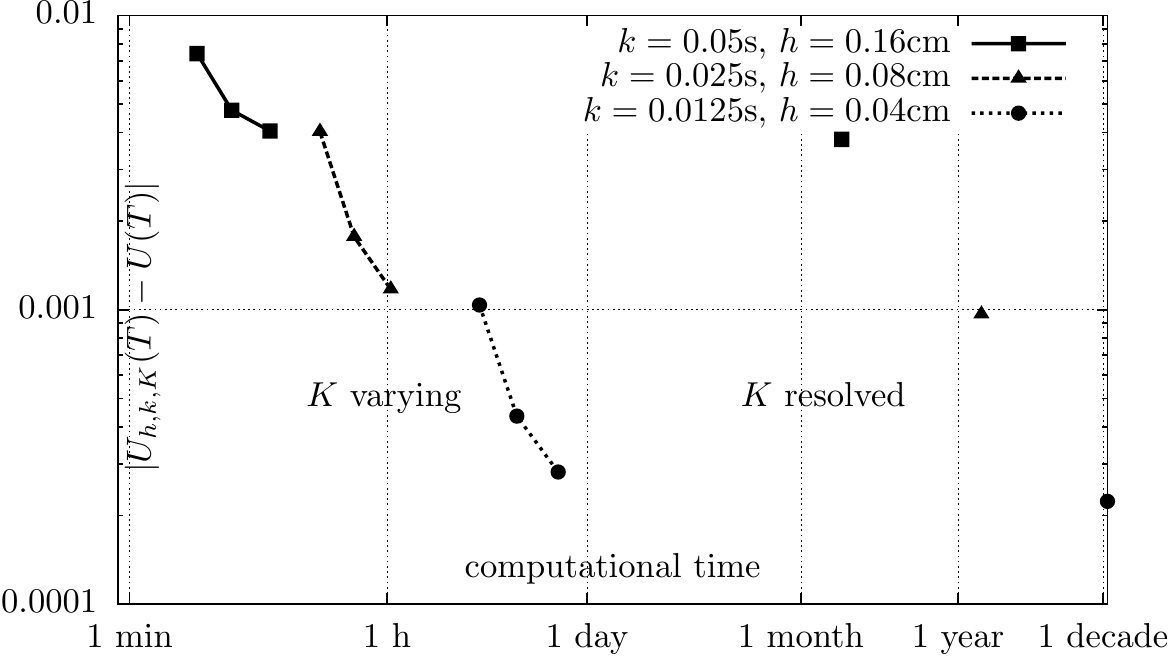}
    \end{minipage}
  \end{center}
  \caption{
    \emph{Left}: Error of the multiscale method under refinement in
    $h,k$ and $K$ versus the extrapolated reference value. \emph{Right}:
    Comparison of the computational times of the multiscale method with
    the corresponding results for a resolved forward simulation. These
    results are based on an extrapolation of the error and a prediction of the
    computational times by multiplying the number of required time steps
    with the average computing times for each step.}
  \label{fig:num2}
\end{figure}

Combining all 27 computations based on three values for $h,k$ and $K$
we find the relation
\[
U(h,k,K)\approx 0.59076+
7.6 h^{2.4} - 1.7 k^{2.2} - 0.04 \epsilon^2 K^{1.9},
\]
which shows approximately second order convergence in both time step
sizes and the mesh size and also the proper scaling of the constants
in the $\O{k^2}$ and $\O{K^2}$ terms. Spatial and temporal errors show
a different sign which is also seen in Figure~\ref{fig:num2}, where we
plot the errors for all computations. In the right sketch of this
figure we compare the computational times of the multiscale approach
with a hypothetical resolved simulation. Here, the errors are
predicted by extrapolation. 
%and use of~(\ref{})
The computational
times are based on the number of Navier Stokes steps, namely
$k^{-1} T$ and the average computational time for each Navier
Stokes step, which is  $\unit[0.135]{s}$ on the  $h=\unit[0.16]{cm}$
mesh, $\unit[0.62]{s}$ for $h=\unit[0.08]{cm}$ and 
$\unit[2.3]{s}$ for $h=\unit[0.04]{cm}$. The results are very similar to those shown in
Figure~\ref{fig:num1:3} for the first example. The best multiscale
results are close to the hypothetical resolved results. Here however,
the savings are substantially larger, with 10 minutes vs. 2 months
(factor 1:8000) for $h=\unit[0.16]{cm}$, 1 hour vs. nearly 2 years for
$h=\unit[0.08]{cm}$ (factor 1:12000) and
15 hours vs. more than 10 years for $h=\unit[0.04]{cm}$ (factor 1:6000). 

Finally, we also evaluate the effect of the parameter $\tolP$ used
to control the periodicity of the Navier-Stokes solution, compare
Theorem~\ref{thm:main}. In Table~\ref{tab:num2:2} we show the errors
at $T=\unit[1\,843\,200]{s}\approx \unit[1]{month}$ for computations
based on $K=\unit[25\,600]{s}\approx \unit[7]{h}$, 
$k=\unit[0.0125]{s}$ and $h=\unit[0.08]{cm}$. The effect of $\tolP$
is very small.

\begin{table}[t]
  \begin{center}
    \begin{tabular}{c|cccc}
      \toprule
      $\tolP$&$10^{-1}$& $10^{-2}$& $10^{-3}$& $10^{-4}$\\
      \midrule
      $\big|U_{h,k,K}(T)_{|_{\tolP}}-U_{h,k,K}(T)_{|_{\tolP=10^{-8}}}  
      \big|$&$1.99\cdot 
      10^{-5}$&$4.88\cdot 10^{-7}$&$3.19\cdot 10^{-7}$&$1.22\cdot 10^{-7}$ \\ 
      \bottomrule
    \end{tabular}
  \end{center}
  \caption{Impact of the periodicity parameter $\tolP$ on the
    error in concentration $U$ in $T=1\,843\,200$s. Computed with respect to $\tolP=10^{-8}$.
    The discretisation is chosen as $h=0.08$cm, $k=0.0125$s and
    $K=25\,600$s.}
  \label{tab:num2:2}
\end{table}

\section{Conclusion}

We have presented a framework for the simulation of temporal
multiscale problems, where we are interested in the evolution of a
slow variable which depends on an oscillating fast variable. The
numerical schemes are designed for models that are given by
partial differential equations. The most important assumption is a
local (in time) proximity of the fast scale variable to the
solution of a periodic problem. An effective scheme for the slow
variable is derived by replacing the fast variable with the periodic
solution which can be computed locally, as no initial values must be
transferred. The only overhead of the multiscale scheme comes from the
identification of initial values required for approximating the
periodic problems. Nevertheless, we gain huge speedups compared to a
simulation with resolved time scales. The efficiency of the multiscale
approach increases when the time scale separation gets larger. 

The resulting scheme depends on several numerical parameters, small
and large time steps $k$ and $K$, and the spatial mesh size $h$. It
remains a topic for a future work to design an automatic and adaptive
algorithm to control all these parameters in order to balance
all contributing error terms.

%\bibliographystyle{plain}
%\bibliography{lit}

\end{document}